\makeatletter\@addtoreset {equation}{section}\makeatother
\def\bexe{\begin{exercise}}\def\eexe{\eex\end{exercise}}
\def\bsol{\begin{solution}}\def\esol{\eex\end{solution}}
\def\bexa{\begin{example}}\def\eexa{\eex\end{example}}
\def\brem{\begin{remark}}\def\erem{\end{remark}}
\def\bthm{\begin{theorem}}\def\ethm{\end{theorem}}
\def\blem{\begin{lemma}}\def\elem{\end{lemma}}
\def\bcor{\begin{corollary}}\def\ecor{\end{corollary}}
\def\beq{\begin{equation}}\def\eeq{\end{equation}}
\def\bpf{\begin{proof}}\def\epf{\end{proof}}
\newcommand{\R}{{\mathbb R}}
\newcommand{\N}{{\mathbb N}}
\newcommand{\Z}{{\mathbb Z}}
\newcommand{\bi}{\begin{itemize}}\newcommand{\ei}{\end{itemize}}
\newcommand{\ben}{\begin{enumerate}}\newcommand{\een}{\end{enumerate}}
\newcommand{\bce}{\begin{center}}\newcommand{\ece}{\end{center}}
\newcommand{\barr}{\begin{array}}\newcommand{\earr}{\end{array}}
\newcommand{\bpm}{\begin{pmatrix}}\newcommand{\epm}{\end{pmatrix}}
\newcommand{\ba}{\begin{array}}\newcommand{\ea}{\end{array}}
\def\eex{\hfill\mbox{$\rfloor$}}
\def\bd{\begin{displaymath}} \def\ed{\end{displaymath}}
\def\ba{\begin{array}} \def\ea{\end{array}}
\newtheorem{theorem}{Theorem}[section]
\newtheorem{lemma}[theorem]{Lemma}
\newtheorem{exercise}[theorem]{Exercise}
\newtheorem{solution}[theorem]{Solution}
\newtheorem{remark}[theorem]{Remark}
\newtheorem{corollary}[theorem]{Corollary}
\newtheorem{example}[theorem]{Example}
\begin{document}

\title{\bf Localized Modes of the Linear Periodic Schr\"{o}dinger Operator with a Nonlocal Perturbation}

%\title{\bf Localized Modes of the Linear Periodic Schr\"{o}dinger Operator with a Nonlocal Perturbation\\

%\bigskip
%{\normalsize Technical report\\ 
%IWRMM-preprint nr. 08/09\\
%Department of Mathematics, University of Karlsruhe (TH)}}

\author{Tom\'{a}\v{s} Dohnal$^1$, Michael Plum$^2$ and Wolfgang Reichel$^2$\\
{\small $^1$ Institut f\"ur Angewandte und Numerische Mathematik}\\
{\small $^2$ Institut f\"ur Analysis}\\
{\small Fakult\"{a}t f\"{u}r Mathematik, Universit\"{a}t Karlsruhe (TH), Germany}}

\date{\today}
%\date{December 11, 2008}
%\date{November 2008, version 2}
\maketitle

%\textit{Other title suggestions:
%\begin{itemize}
%\item Point Spectrum of the Periodic Schr\"{o}dinger Operator with an Interface
%\item Waveguiding via an Interface in the Linear Periodic Schr\"{o}dinger Equation
%\end{itemize}

\begin{abstract}
We consider the existence of localized modes corresponding to eigenvalues of the periodic Schr\"{o}dinger operator $-\partial_x^2+ V(x)$ with an interface. The interface is modeled by a jump either in the value or the derivative of $V(x)$ and, in general, does not correspond to a localized perturbation of the perfectly periodic operator. The periodic potentials on each side of the interface can, moreover, be different. As we show, eigenvalues can only occur in spectral gaps. We pose the eigenvalue problem as a $C^1$ gluing problem for the fundamental solutions (Bloch functions) of the second order ODEs on each side of the interface. The problem is thus reduced to finding matchings of the ratio functions $R_\pm=\frac{\psi_\pm'(0)}{\psi_\pm(0)}$, where $\psi_\pm$ are those Bloch functions that decay on the respective half-lines. These ratio functions are analyzed with the help of the Pr\"{u}fer transformation. The limit values of $R_\pm$ at band edges depend on the ordering of Dirichlet and Neumann eigenvalues at gap edges. We show that the ordering can be determined in the first two gaps via variational analysis for potentials satisfying certain monotonicity conditions. Numerical computations of interface eigenvalues are presented to corroborate the analysis.
\end{abstract}

\section{Introduction}\label{S:intro}

Localization for perturbed periodic Schr\"{o}dinger operators $L=-\Delta + V_0(x) + \tilde{V}(x)$, where $V_0(x)$ is periodic in $x\in \R^n, n\in \N$, is a classical problem traditionally treated by spectral theory. Most commonly it is studied for perturbations $\tilde{V}(x)$ that are either compactly supported, see, e.g., Deift \& Hempel \cite{DH86}, Alama et al. \cite{ADH89}, and Borisov \& Gadyl'shin \cite{BG07} or fast decaying, e.g. $\tilde{V}\in L^{n/2}(\R^n)$, cf. \v{Z}eludev \cite{Zelud67} and Alama et al. \cite{ADH89}. 
%add B91,B94 ?
Both of these scenarios can lead to eigenvalues of $L$ and thus to localization. Potentials $\tilde{V}$ describing random perturbation also yield eigenvalues due to Anderson localization, studied, for example, by Kirsch et al. \cite{KSS98} and Veseli\'{c} \cite{V02}. We investigate localization in the one-dimensional case $n=1$ due to the presence of deterministic interfaces which cannot be represented as localized perturbations of $-\partial_x^2+V_0(x)$. Such an interface arises, for instance, when $\tilde{V}(x)$ is periodic on one side of the interface and vanishes on the other side (we assume commensurability of the periods of $\tilde{V}$ and $V_0$ to preserve periodicity on each side of the interface). This topic has been previously studied mainly by Korotyaev via spectral theory \cite{KOR00,KOR05}. We, on the other hand, use the properties of the fundamental solutions of the 1D spectral problems of the periodic operators corresponding to each side of the interface and pose the eigenvalue problem as a $C^1$-gluing problem for the decaying Floquet-Bloch solutions from either interface side.
 This approach allows us to provide some concrete conditions on $V_0$ and the perturbation $\tilde{V}$ directly (without conditions on the spectrum of $-\partial_x^2 + V_0(x)$) that ensure eigenvalue existence in the semi-infinite and the first finite gap of the continuous spectrum of $L$. Our approach is also arguably conceptually simpler than that of \cite{KOR00,KOR05}.

Localized waves at interfaces of two periodic (linear) structures have been also demonstrated experimentally in the context of electron waves in crystals by Ohno et al. \cite{OMAH92} and for optical waves in photonic crystals by, e.g., Suntsov et al. \cite{Sunts_etal07}.

In detail, within the framework of the eigenvalue problem
\beq\label{E:L_prob}
L\psi = \lambda \psi, \qquad L = -\partial_x^2 + V(x), \ x\in \R
\eeq
we study the following two interface problems. Firstly, an \textit{interface made of even periodic potentials}
\beq\label{E:even_interf}
V(x) = \chi_{\{x<0\}}V_-(x)+\chi_{\{x\geq 0\}}V_+(x), 
\eeq
where $V_\pm$ has period $d_\pm>0$, i.e., $V_\pm(x+d_\pm)=V_\pm(x)$ for all $x\in \R$, and furthermore satisfies $V_\pm(\frac{d_\pm}{2}+x)=V_\pm(\frac{d_\pm}{2}-x)$ for all $x\in[0,\frac{d_\pm}{2}]$. Secondly, an \textit{interface made of dislocated even periodic potentials}
\beq\label{E:transl_interf}
V(x) = \chi_{\{x< 0\}}V_0(x+s)+ \chi_{\{x\geq 0\}} V_0(x+t),
\eeq
where $V_0$ has period $d>0$, i.e., $V_0(x+d)=V_0(x)$ for all $x\in \R$, and satisfies $V_0(\frac{d}{2}+x)=V_0(\frac{d}{2}-x)$ for all $x\in[0,\frac{d}{2}]$. The dislocation parameters are $t,s \in \R$.
Here $\chi$ is the characteristic function. Note that under the periodicity conditions the evenness of $V_\pm$ and $V_0$ about $x=\frac{d_\pm}{2}$ and $x=\frac{d}{2}$ within the periodicity cell $[0,d_\pm]$ and $[0,d]$, respectively, is equivalent to evenness of $V_\pm$ and $V_0$ about $x=0$. Hence, in the following we will simply require that the potentials be periodic and even.
Unless otherwise stated, the potentials $V_\pm$ and $V_0$ are continuous and hence bounded. 

One of the simplest examples of the interface \eqref{E:even_interf} is the \textit{additive interface}
\beq\label{E:add_interf}
V_-(x) = V_0(x), \ V_+(x) = V_0(x)+\alpha, \quad V_0(x+d)=V_0(x),  \ V_0(-x)=V_0(x), \quad \alpha \in \R, \ d>0,
\eeq
generated by merely changing the average value of the potential on one half of the real axis. This example is studied in more detail in Section \ref{S:example_add_jump} since the conditions on eigenvalue existence become rather specific in this particular case.

Schematic pictures of the two potentials \eqref{E:even_interf} and \eqref{E:transl_interf} are displayed in Figure~\ref{F:V}.
\begin{figure}[!ht]
\begin{center}
\includegraphics[height=3cm,width=8cm]{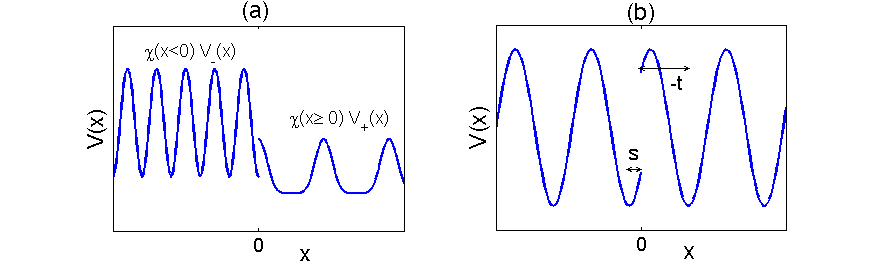}
\end{center}
\caption{A cartoon of example potentials $V$ for the case \eqref{E:even_interf} in (a) and \eqref{E:transl_interf} in (b).}
\label{F:V}
\end{figure}

Equation \eqref{E:L_prob} finds applications in many fields of natural science. Perhaps most notably it describes the wave function of an electron in a one dimensional crystal, where waves localized at a crystal interface are typically called Tamm states \cite{Tamm32}. The equation also directly applies to the description of light propagating transversally to the direction of periodicity of a non-dispersive, lossless, linear photonic crystal which is homogeneous in the $y$ and $z$ directions. Suppose the refractive index $n$ varies periodically in the $x-$direction and its mean has a jump at $x=0$, such that $n(x)=\sqrt{1+W(x)}, W(x) = -\frac{c^2}{\omega^2}V(x)$. We assume the following form of the electric field, 
$$\vec{E} = (0,\psi(x),0)^T e^{i(kz-\omega t)},$$
such that the field is polarized in the $y-$direction, the waves propagate in the $z-$ direction and the $x-$profile is stationary. Then Maxwell's equations exactly reduce to 
$$(\partial_x^2-k^2)\psi +\frac{\omega^2}{c^2}(1+W(x))\psi = 0.$$ 
With $V(x) = -\frac{\omega^2}{c^2}W(x)$ and setting $\lambda = \frac{\omega^2}{c^2}-k^2$, we recover \eqref{E:L_prob}.

Another example of an application of \eqref{E:L_prob} is the description of matter waves in one dimensional Bose-Einstein condensates loaded onto an optical lattice, see Choi \& Niu \cite{CN99}. The density of a condensate is described by the wavefunction $u$ governed by the Gross-Pitaevskii equation \cite{CN99,Gross63,Pitaev61}
$$ i \hbar \partial_t u + \frac{\hbar^2}{2m}\partial_x^2u -W(x) u -g|u|^2u=0,$$
where, in our setting, $W(x)$ is periodic but has a jump at $x=0$. Here $\hbar$  is Planck's constant, $m$ is the boson mass, $W$ is the potential induced by the optical lattice and $g$ is the scattering length. In the linear regime, $g=0$, stationary waves $e^{-i\lambda t}\psi(x)$ obey (after rescaling) equation \eqref{E:L_prob}.

The rest of the paper is organized as follows. In Section \ref{S:no_jump} we review the needed facts on spectral properties of the interface-free periodic Schr\"{o}dinger operators with an even potential including the problem of ordering of spectral band edges according to even/odd symmetry of the Bloch functions. Section \ref{S:jump_even_pot} discusses the interface \eqref{E:even_interf} and introduces the main tools of our analysis, namely the $C^1$-matching condition and the Pr\"{u}fer transformation. The theory is then applied to the additive interface example \eqref{E:add_interf} and numerical computations of point spectrum are performed. In Section~\ref{S:disloc_interf} we analyze the dislocation problem \eqref{E:transl_interf} for the cases $s=-t$ and $s=0$ using the same tools as in Section \ref{S:jump_even_pot} plus differential inequalities and variational methods. Numerical examples are, once again, provided.

\section{Spectrum of the Interface-Free Problem}\label{S:no_jump}
We review, first, some well known results on the spectrum and the eigenfunctions of the interface-free operator $L_0 := -\partial_x^2+V_0(x)$, where $V_0(x+d)=V_0(x)$ is continuous and $V_0(-x)=V_0(x)$. Good sources on the theory of the periodic Schr\"{o}dinger operator are Magnus and Winkler \cite{MagWin66}, Eastham \cite{Eastham} and Reed \& Simon \cite{RS}. 

$L_0$ has a purely continuous spectrum (see Theorem XIII.90 in \cite{RS}) consisting of bands $[s_{2n-1},s_{2n}]$ so that 
\[\sigma(L_0) = \bigcup_{n\in \N}[s_{2n-1},s_{2n}],\]
where $s_n\in\R$ and $s_{2n-1}<s_{2n}\leq s_{2n+1}$ \cite{Eastham}. When $s_{2n+1}>s_{2n}$, we say that $\sigma(L_0)$ has the finite gap $G_n:=(s_{2n},s_{2n+1})$. Clearly, $\sigma(L_0)$ has also the semi-infinite gap $G_0=(s_0,s_1):=(-\infty,s_1)$. According to Floquet theory \cite{Eastham} the spectrum $\sigma(L_0)$ can be easily found via the use of the monodromy matrix of the second order ODE $L_0\psi=\lambda \psi$. Figure~\ref{F:spec_L0} presents the numerically computed spectrum of the operator $L_0$ with  $V_0(x) = \sin^2(\pi x/10)$.
\begin{figure}[!ht]
\begin{center}
\includegraphics[scale=.65]{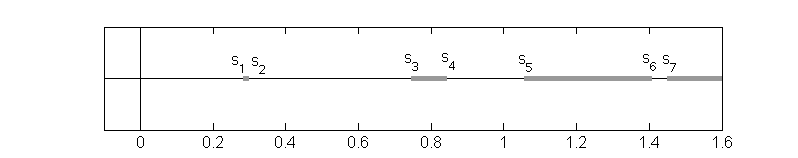}
\end{center}
\caption{Spectrum of $L_0$ for $V_0(x) = \sin^2(\pi x/10)$.}
\label{F:spec_L0}
\end{figure}

The ODE $L_0 \psi = \lambda \psi$ has two linearly independent solutions, so called, Bloch functions. For real $\lambda \not\in \partial \sigma(L_0)$ they are of the form
\beq\label{E:Bloch_form}
\psi_1(x;\lambda)=p_1(x;\lambda) e^{-ik(\lambda)x}, \quad \psi_2(x;\lambda)=p_2(x;\lambda)e^{ik(\lambda)x},
\eeq
where $k\in \R\setminus \{0\}$ if $\lambda \in \text{int}(\sigma(L_0))$ and $k\in i\R\setminus\{0\}$ if $\lambda \in \R \setminus \sigma(L_0)$, and $p_{1,2}(x;\lambda)$ are real-valued and $2d-$periodic in $x$. In fact, $p_{1,2}$ are either $d-$periodic of $d-$anti-periodic. If $\lambda \in \partial (\sigma(L_0))$, the Bloch functions are of the form 
\beq\label{E:Bloch_form_bdry}
\psi_1(x;\lambda)=p_1(x;\lambda), \quad \psi_2(x;\lambda)=p_2(x;\lambda) +x p_1(x;\lambda),
\eeq 
where again $p_{1,2}(x;\lambda)$ are real and $2d-$periodic in $x$. 

%Note that the case of two periodic lin. indep. solutions (i.e. solutions \eqref{E:Bloch_form} with $k=0$) occurs when $\lambda$ is at a band edge that is not a common point of two bands (i.e. when 2 bands touch). Such a point is, however, not on $\partial \sigma(L_0)$.

The evenness of the potential $V_0(x)$ and the fact that only one linearly independent bounded Bloch function (namely $\psi_1(x;\lambda) = p_1(x;\lambda)$) exists at any $\lambda \in \partial(\sigma(L_0))$ imply that this solution must be even or odd and hence it satisfies at the boundary-points $x=0$ and $x=d$ either Dirichlet- or Neumann-boundary conditions. For $k\in \N$ let $(\mu_k,\zeta_k)$ denote the $k$-th Dirichlet eigenpair of $L_0$ on $[0,d]$ satisfying $\zeta_k(0) = \zeta_k(d)=0$ and let $(\nu_k,\eta_k)$ be the $k$-th Neumann eigenpair of $L_0$ on $[0,d]$ such that $\eta'_k(0) = \eta'_k(d)=0$. The following lemma may be well known, cf. \cite{Eastham}, Theorem 1.3.4.

%need only: $V_0$ be a bounded, even and $d$-periodic potential
\begin{lemma} For the first gap edge we have $s_1=\nu_1$. If $k \geq 1$ and if $s_{2k}\not = s_{2k+1}$ then $s_{2k}=\min\{\mu_k,\nu_{k+1}\}$, $s_{2k+1}=\max\{\mu_k,\nu_{k+1}\}$. Moreover, the following properties of the eigenfunctions are known (note that the even/odd-property applies with respect to reflection about $\frac{d}{2}$):
\begin{table}[!ht]
\begin{center}
\begin{tabular}{|c|l|c|c|r|}
\hline
 & eigenvalue & \multicolumn{3}{|c|}{eigenfunction properties}\\
\hline
Dirichlet & $\mu_{2k-1}$ & even & $d-$anti-periodic & $\zeta_{2k-1}'(\frac{d}{2})=0$\\
\hline
Dirichlet & $\mu_{2k}$ & odd & $d-$periodic & $\zeta_{2k}(\frac{d}{2})=0$\\
\hline
Neumann & $\nu_{2k-1}$ & even & $d-$periodic & $\eta_{2k-1}'(\frac{d}{2})=0$\\
\hline
Neumann & $\nu_{2k}$ & odd & $d-$anti-periodic & $\eta_{2k}(\frac{d}{2})=0$\\
\hline
\end{tabular}
\end{center}
\end{table}
\label{d_half}
\end{lemma}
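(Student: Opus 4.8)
The plan is to reduce the statement to the classical Floquet/oscillation theory of the $d$-periodic problem (see e.g.\ \cite{MagWin66,Eastham}) together with two reflection arguments exploiting the evenness of $V_0$. Two facts will be used throughout: (a) at a band edge $s_m\in\partial\sigma(L_0)$ there is, up to scalars, exactly one bounded solution of $L_0\psi=s_m\psi$, it is $d$-periodic or $d$-anti-periodic, and --- since $V_0$ is even --- even or odd about $x=0$ (this is precisely what was recalled just before the lemma); (b) the oscillation theorem for the periodic operator: the bounded eigenfunction at $s_1$ has no zeros, and for every $k\geq1$ the bounded eigenfunctions at $s_{2k}$ and at $s_{2k+1}$ both have exactly $k$ zeros in $[0,d)$.

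\emph{Step 1 (two reflections).} Since $V_0(-x)=V_0(x)$, and hence also $V_0(\tfrac d2+x)=V_0(\tfrac d2-x)$, the following hold. First, if $\psi$ solves $L_0\psi=\lambda\psi$ on $[0,d]$ with $\psi(0)=0$ (resp.\ $\psi'(0)=0$), then its odd (resp.\ even) reflection about $0$ solves the same ODE and agrees with $\psi$ to first order at $0$; by uniqueness this yields a genuine global solution, so every Dirichlet (resp.\ Neumann) eigenfunction of $[0,d]$ extends to $\R$. Second, since $V_0$ is even about $\tfrac d2$ and the Dirichlet and Neumann eigenvalues of $[0,d]$ are simple, $x\mapsto\zeta_k(d-x)$ equals $\pm\zeta_k$ and $x\mapsto\eta_k(d-x)$ equals $\pm\eta_k$; hence each $\zeta_k$ and each $\eta_k$ is even or odd about $\tfrac d2$. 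Note that an eigenfunction which is even about $\tfrac d2$ cannot vanish at $\tfrac d2$, for then $\tfrac d2$ would be a double zero.

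\emph{Step 2 (the table).} Fix a Dirichlet eigenfunction $\zeta_m$; by Step 1 it is even or odd about $\tfrac d2$. If even, then $\zeta_m'(\tfrac d2)=0$ and $\zeta_m(\tfrac d2)\neq0$, so its zeros in $(0,d)$ occur in pairs symmetric about $\tfrac d2$; thus the number $m-1$ of interior zeros is even, i.e.\ $m=2k-1$. If odd, then $\zeta_m(\tfrac d2)=0$, so $m-1$ is odd, i.e.\ $m=2k$. For the (anti-)periodicity, extend $\zeta_m$ oddly about $0$; from $\zeta_m(d-x)=\pm\zeta_m(x)$ and $\zeta_m(-x)=-\zeta_m(x)$ one gets $\zeta_m(x+d)=-\zeta_m(x)$ in the even-about-$\tfrac d2$ case and $\zeta_m(x+d)=\zeta_m(x)$ in the odd case, i.e.\ $\zeta_{2k-1}$ is $d$-anti-periodic and $\zeta_{2k}$ is $d$-periodic. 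The two Neumann rows follow identically, now extending $\eta_m$ \emph{evenly} about $0$, which removes the sign flip and gives $\eta_{2k-1}$ $d$-periodic, $\eta_{2k}$ $d$-anti-periodic; the last column merely records the associated condition at $\tfrac d2$. This proves the table.

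\emph{Step 3 (gap edges, $s_1$, and the main point).} Let $k\geq1$. By Step 2 and its Neumann analogue, the global extensions of $\zeta_k$ and of $\eta_{k+1}$ are bounded $d$-periodic or $d$-anti-periodic eigenfunctions of $L_0$, so $\mu_k,\nu_{k+1}\in\partial\sigma(L_0)$; moreover $\zeta_k$ has $k-1$ interior zeros plus a zero at $0$, hence $k$ zeros in $[0,d)$, while $\eta_{k+1}$ has $k$ interior zeros and none at $0$ or $d$, hence also $k$ zeros in $[0,d)$. By fact (b), a band edge whose eigenfunction has $k$ zeros in $[0,d)$ is $s_{2k}$ or $s_{2k+1}$, so $\mu_k,\nu_{k+1}\in\{s_{2k},s_{2k+1}\}$. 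Now if $s_{2k}\neq s_{2k+1}$ then $\mu_k\neq\nu_{k+1}$: otherwise $L_0\psi=\mu_k\psi$ would have both a Dirichlet-type and a Neumann-type bounded solution, i.e.\ two linearly independent bounded solutions, forcing the monodromy matrix at $\mu_k$ to equal $\pm I$ and hence $s_{2k}=s_{2k+1}$, a contradiction. Thus $\{\mu_k,\nu_{k+1}\}=\{s_{2k},s_{2k+1}\}$, and $s_{2k}<s_{2k+1}$ gives $s_{2k}=\min\{\mu_k,\nu_{k+1}\}$, $s_{2k+1}=\max\{\mu_k,\nu_{k+1}\}$. Finally $\eta_1$ has no interior zeros, hence one sign, and by the first Neumann row it extends to a bounded $d$-periodic eigenfunction with no zeros in $[0,d)$; by fact (b) its eigenvalue is $s_1$, so $s_1=\nu_1$. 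The argument is otherwise routine once (a)--(b) are granted; the only delicate point is this last deduction $\mu_k\neq\nu_{k+1}$ on an open gap, where the standard ``two bounded solutions force a closed gap'' property of the monodromy matrix is invoked.
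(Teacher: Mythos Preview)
Your proof is correct. The paper does not actually supply a proof of this lemma; it merely states that the result ``may be well known'' and points to \cite{Eastham}, Theorem~1.3.4. So there is nothing in the paper to compare your argument against.

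That said, your argument is a clean, self-contained derivation from standard Floquet/oscillation theory, and each step checks out. The parity/periodicity bookkeeping in Step~2 is correct (the odd reflection about $0$ for Dirichlet and the even reflection for Neumann produce exactly the sign flips you claim), and the zero count in Step~3 is accurate: $\zeta_k$ has $k-1$ interior zeros plus the one at $x=0$, while $\eta_{k+1}$ has $k$ interior zeros and none at the endpoints, so both contribute $k$ zeros on $[0,d)$, which by the periodic oscillation theorem pins them to $\{s_{2k},s_{2k+1}\}$. The final step, ruling out $\mu_k=\nu_{k+1}$ on an open gap via the ``two bounded solutions $\Rightarrow$ monodromy $=\pm I$'' argument, is exactly the right observation and is the only place where anything nonroutine happens. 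Your write-up could serve as the missing proof in the paper.
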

\noindent
{\em Remark.} Note that $\lambda\in G_n$ can never be a Dirichlet or Neumann eigenvalue since any corresponding eigenfunction could be extended to a bounded solution of $L_0 \psi=\lambda \psi$ on $\R$ by reflection and periodic extension. Such nontrivial solutions cannot exist for $\lambda\in G_n$ by \eqref{E:Bloch_form}.

As we show in Sections \ref{S:jump_even_pot} and \ref{S:disloc_interf}, ordering between the Dirichlet and Neumann eigenvalues $\mu_k$ and $\nu_{k+1}$ plays an important role for existence of interface eigenvalues. It is, however, known that all orderings are in general possible, i.e., for any given ordering of the Dirichlet and Neumann eigenvalues (respecting the condition $\max\{\mu_k,\nu_{k+1}\}\leq \min \{\mu_{k+1},\nu_{k+2}\}, k\in \N$) a corresponding even potential $V_0$ exists, see Theorem 3 in Garnett \& Trubowitz \cite{GT84}. Nevertheless, the following lemma provides an ordering of low eigenvalues under some monotonicity assumptions on the potential $V_0$.

%need only: $V_0\in L^\infty(0,d)$ even, $d$-periodic potential. 
\begin{lemma} \label{L:ordering}

\begin{itemize}
\item[(a)] If $V_0$ is strictly increasing on $[0,\frac{d}{2}]$, then $\nu_2<\mu_1$. The Neumann eigenfunction corresponding to $\nu_2$ is strictly monotone on $[0,d]$ and odd with respect to $\frac{d}{2}$. 
\item[(b)] If $V_0$ is strictly decreasing on $[0,\frac{d}{2}]$, then $\mu_1<\nu_2$. The Dirichlet eigenfunction corresponding to $\mu_1$ is strictly monotone on $[0,\frac{d}{2}]$ and even with respect to $\frac{d}{2}$. 
\end{itemize}
\end{lemma}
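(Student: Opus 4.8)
The plan is to reduce the statement to two mixed Sturm--Liouville problems on the half-period $[0,\frac d2]$, establish the monotonicity of the relevant eigenfunctions by an elementary convexity argument, and obtain the ordering of $\mu_1,\nu_2$ from a positive-supersolution (Barta-type) criterion rather than from a direct Rayleigh-quotient comparison. By Lemma~\ref{d_half} the eigenfunction $\eta_2$ of $\nu_2$ is odd about $\frac d2$ with $\eta_2'(0)=\eta_2'(d)=0$ and $\eta_2(\frac d2)=0$; since $\nu_2$ is the second Neumann eigenvalue of $L_0$ on $[0,d]$, $\eta_2$ has its unique interior zero at $\frac d2$, so $\eta_2|_{[0,d/2]}$ (normalized positive on $[0,\frac d2)$) is the first eigenfunction of $L_0u=\lambda u$, $u'(0)=0$, $u(\frac d2)=0$, with eigenvalue $\nu_2$. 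Similarly $\zeta_1|_{[0,d/2]}$ (normalized positive on $(0,\frac d2]$) is the first eigenfunction of $L_0u=\lambda u$, $u(0)=0$, $u'(\frac d2)=0$, with eigenvalue $\mu_1$; here $\zeta_1>0$ and $\eta_2>0$ on $(0,\frac d2)$ because they restrict the first Dirichlet and second Neumann eigenfunctions on $[0,d]$.

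For (a) let $V_0$ be strictly increasing on $[0,\frac d2]$. The Rayleigh quotient gives $\nu_2>\min_{[0,d/2]}V_0=V_0(0)$ (strictly, as admissible functions vanishing at $\frac d2$ are nonconstant), so $\eta_2''(0)=(V_0(0)-\nu_2)\eta_2(0)<0$ and $\eta_2'<0$ just right of $0$. If $\eta_2'$ vanished at a first $b\in(0,\frac d2)$, then $\eta_2''(b)\ge0$, hence $(V_0(b)-\nu_2)\eta_2(b)\ge0$, i.e.\ $V_0(b)\ge\nu_2$; by strict monotonicity $V_0>\nu_2$ on $(b,\frac d2]$, so $\eta_2''=(V_0-\nu_2)\eta_2>0$ there, making $\eta_2$ strictly convex on $[b,\frac d2]$, which together with $\eta_2'(b)=0$ forces $\eta_2(\frac d2)>\eta_2(b)>0$, contradicting $\eta_2(\frac d2)=0$. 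Thus $\eta_2'<0$ on $(0,\frac d2)$, so $\eta_2$ is strictly decreasing on $[0,\frac d2]$; its odd extension about $\frac d2$ (the eigenfunction on $[0,d]$) is then strictly decreasing on all of $[0,d]$. This is the eigenfunction statement in (a).

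For the ordering $\nu_2<\mu_1$ --- the main point --- set $p:=-\eta_2'$ on $[0,\frac d2]$. By the previous paragraph $p>0$ on $(0,\frac d2)$, $p(0)=-\eta_2'(0)=0$, and $p'(\frac d2)=-\eta_2''(\frac d2)=-(V_0(\frac d2)-\nu_2)\eta_2(\frac d2)=0$, while $p'=-\eta_2''=(\nu_2-V_0)\eta_2$ is continuous, so $p$ satisfies the same (Dirichlet at $0$, Neumann at $\frac d2$) boundary conditions as $\zeta_1$. Integration by parts, using $\zeta_1'(\frac d2)=0=p(0)$, gives
\[\int_0^{d/2}\zeta_1'p'\,dx+\int_0^{d/2}V_0\,\zeta_1 p\,dx=\int_0^{d/2}(L_0\zeta_1)\,p\,dx=\mu_1\int_0^{d/2}\zeta_1 p\,dx .\]
Substituting $p'=(\nu_2-V_0)\eta_2$ and $p=-\eta_2'$ and integrating by parts once more in the Stieltjes sense (the boundary terms vanish since $\zeta_1(0)=0=\eta_2(\frac d2)$),
\[\int_0^{d/2}\zeta_1'p'\,dx+\int_0^{d/2}(V_0-\nu_2)\zeta_1 p\,dx=\int_0^{d/2}(\nu_2-V_0)(\zeta_1\eta_2)'\,dx=\int_0^{d/2}\zeta_1\eta_2\,dV_0 ,\]
with $dV_0$ the Stieltjes measure of the increasing $V_0$ (equal to $V_0'\,dx$ if $V_0\in C^1$). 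Subtracting $\nu_2\int_0^{d/2}\zeta_1 p\,dx$ from the first line and comparing with the second gives
\[(\mu_1-\nu_2)\int_0^{d/2}\zeta_1 p\,dx=\int_0^{d/2}\zeta_1\eta_2\,dV_0>0,\]
since $\zeta_1\eta_2>0$ on $(0,\frac d2)$ and $V_0$ is strictly increasing; and $\int_0^{d/2}\zeta_1 p\,dx=-\int_0^{d/2}\zeta_1\eta_2'\,dx>0$ because its integrand is positive on $(0,\frac d2)$. Hence $\mu_1>\nu_2$.

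Part (b) follows from (a) by the reflection $\widetilde V_0(x):=V_0(\frac d2-x)$: when $V_0$ is strictly decreasing, $\widetilde V_0$ is strictly increasing, and $u\mapsto u(\frac d2-\cdot)$ interchanges the Neumann--Dirichlet and Dirichlet--Neumann problems, so the first Neumann--Dirichlet eigenvalue of $\widetilde V_0$ equals $\mu_1$ and the first Dirichlet--Neumann eigenvalue of $\widetilde V_0$ equals $\nu_2$; applying (a) to $\widetilde V_0$ yields $\mu_1<\nu_2$ and that the first Neumann--Dirichlet eigenfunction of $\widetilde V_0$ is strictly decreasing, whence $\zeta_1$ is strictly increasing on $[0,\frac d2]$ (being even about $\frac d2$ by Lemma~\ref{d_half}, it is not monotone on all of $[0,d]$). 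I expect the eigenvalue ordering to be the hard part: a direct comparison of the two Rayleigh quotients breaks down because $\zeta_1$ need not be monotone when $\mu_1<\max_{[0,d/2]}V_0$, and the argument above sidesteps this by turning the monotonicity of the \emph{other} eigenfunction into the positive supersolution $p=-\eta_2'$ with the correct boundary data, the strictness coming exactly from $dV_0$ having positive mass.
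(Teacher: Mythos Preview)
Your proof is correct and takes a genuinely different route from the paper's. The paper proceeds via a variational lemma (Lemma~\ref{min_lemma}) characterizing $\min\{\kappa_{ND},\kappa_{DN}\}$ as a constrained minimum over $H^1$ functions with a zero, and then uses the Hardy--Littlewood--P\'olya rearrangement inequality (applied to the decreasing rearrangement of the eigenfunction and the strict monotonicity of $V_0$) to force $\kappa_{ND}<\kappa_{DN}$ and simultaneously obtain monotonicity of the eigenfunction. Your argument is more elementary and ODE-based: the monotonicity of $\eta_2$ follows from a direct convexity analysis of the equation, and the ordering $\nu_2<\mu_1$ comes from the neat observation that $p=-\eta_2'$ has exactly the Dirichlet--Neumann boundary data of $\zeta_1$, so the identity $(\mu_1-\nu_2)\int \zeta_1 p = \int \zeta_1\eta_2\,dV_0$ drops out by integration by parts. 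The rearrangement approach is more conceptual and generalizes readily (it gives both the ordering and the monotonicity in one stroke, and extends to other symmetric-decreasing settings), whereas your argument avoids any rearrangement machinery and works under the minimal continuity hypothesis on $V_0$ via the Stieltjes integral; the price is that the monotonicity of $\eta_2$ must be established separately before the ordering can be read off.
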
 

The proof is based on the following result.

\begin{lemma} Consider a potential $V_0\in L^\infty(a,b)$ (not necessarily periodic, even or continuous) and let $\kappa_{ND}$ be the first eigenvalue of $L_0=-\partial_x^2+V_0$ on $[a,b]$ with the boundary condition $u'(a)=0=u(b)$, whereas $\kappa_{DN}$ denotes the first eigenvalue of the same differential operator but with boundary conditions $u(a)=0=u'(b)$. Then
\begin{equation}
\min\{\kappa_{ND},\kappa_{DN}\} = \min\left\{ \int_a^b {v'}^2+V_0(x) v^2\,dx: v \in H^1(a,b) \mbox{ has a zero and } \int_a^b v^2\,dx=1\right\}.
\label{min}
\end{equation}
Moreover, if $V_0$ is strictly increasing on $[a,b]$ then $\kappa_{ND}<\kappa_{DN}$ and any eigenfunction for $\kappa_{ND}$ with $u(a)>0$ is strictly decreasing on $[a,b]$. If $V_0$ is strictly decreasing on $[a,b]$ then $\kappa_{DN}<\kappa_{ND}$ and any eigenfunction for $\kappa_{DN}$ with $u'(a)>0$ is strictly increasing on $[a,b]$.
% THE FOLLOWING MAY BE TRUE BUT IS NOT NECCESSARY
% and $\kappa_{ND}$ is strictly increasing with respect to $a$ and strictly decreasing 
% with respect to $b$. 
%and $\kappa_{DN}$ is strictly increasing with respect to $a$ and strictly decreasing 
%with respect to $b$.
\label{min_lemma}
\end{lemma}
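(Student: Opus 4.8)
The plan is to treat the three claims separately. For the variational identity \eqref{min}, the key observation is that the right-hand side is the infimum of the Rayleigh quotient over the class $\mathcal{M} := \{v \in H^1(a,b) : v \text{ has a zero in } [a,b],\ \|v\|_{L^2} = 1\}$. First I would note that both the $ND$- and $DN$-eigenfunctions lie in $\mathcal{M}$ (each satisfies a Dirichlet condition at one endpoint, hence vanishes there), so the infimum is $\le \min\{\kappa_{ND}, \kappa_{DN}\}$. For the reverse inequality, take any $v \in \mathcal{M}$ with a zero at some point $c \in [a,b]$. Split the Rayleigh quotient as an integral over $[a,c]$ plus an integral over $[c,b]$. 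On $[a,c]$ the restriction of $v$ satisfies $v(c)=0$ and no constraint at $a$, so its energy is bounded below by $\lambda_1^{ND}(a,c)\int_a^c v^2$, the first Neumann--Dirichlet eigenvalue on $[a,c]$; similarly on $[c,b]$ its energy is bounded below by $\lambda_1^{DN}(c,b)\int_c^b v^2$. Then I would invoke domain monotonicity of these one-sided eigenvalues (shrinking the interval with the Dirichlet end fixed and the natural end free can only raise the first eigenvalue), giving $\lambda_1^{ND}(a,c) \ge \lambda_1^{ND}(a,b) = \kappa_{ND}$ when $c \le b$, wait --- more carefully, one wants $\lambda_1^{ND}(a,c)\ge \kappa_{ND}$ and $\lambda_1^{DN}(c,b)\ge\kappa_{DN}$, both of which follow from monotonicity with respect to the free endpoint approaching the fixed Dirichlet endpoint. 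Combining, the energy of $v$ is at least $\min\{\kappa_{ND},\kappa_{DN}\}(\int_a^c v^2 + \int_c^b v^2) = \min\{\kappa_{ND},\kappa_{DN}\}$, which gives the identity.

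For the strict ordering under monotonicity, suppose $V_0$ is strictly increasing. Let $u$ be a first $ND$-eigenfunction, normalized so $u(a)>0$; since it is a ground state for the $ND$ problem it can be chosen positive, so $u>0$ on $[a,b)$ and $u(b)=0$. Define the reflection $\tilde u(x) := u(a+b-x)$, which is a first $DN$-eigenfunction for the operator $-\partial_x^2 + \tilde V_0$ with $\tilde V_0(x) = V_0(a+b-x)$ strictly \emph{decreasing}. The idea is to compare $\kappa_{ND}$ and $\kappa_{DN}$ by using $u$ (or a truncation of it) as a trial function in the variational characterization of the other eigenvalue, exploiting that replacing $V_0$ by its mirror image $\tilde V_0 \le V_0$ on, say, the half where it matters --- more precisely, I would use the Rayleigh quotient for $\kappa_{DN}$ with trial function $\tilde u$ and note $\int_a^b \tilde u'^2 + \tilde V_0 \tilde u^2 = \int_a^b u'^2 + V_0 u^2 = \kappa_{ND}$, but $\tilde V_0(x) \le V_0(x)$ pointwise is \emph{not} generally true. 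The cleaner route: both $\kappa_{ND}$ and $\kappa_{DN}$ equal the minimum in \eqref{min} restricted to functions vanishing at $b$ resp. at $a$; use the $ND$-eigenfunction $u$ as a trial function after the change of variables $x \mapsto a+b-x$, producing a competitor for $\kappa_{DN}$ whose energy is $\int_a^b u'^2 + V_0(a+b-x)u(a+b-x)^2\,dx$. Comparing this with $\kappa_{ND} = \int u'^2 + V_0(x) u(x)^2$, the difference is $\int_a^b \big(V_0(a+b-x) - V_0(x)\big) u(a+b-x)^2 \,dx$. Splitting at the midpoint $m = \tfrac{a+b}{2}$ and substituting, this difference becomes $\int_a^m \big(V_0(a+b-x)-V_0(x)\big)\big(u(a+b-x)^2 - u(x)^2\big)\,dx$; on $[a,m]$ we have $a+b-x \ge x$ so $V_0(a+b-x) - V_0(x) \ge 0$ by monotonicity, while $u(a+b-x)^2 - u(x)^2 \le 0$ since $u$ is positive on $[a,b)$ and, being a ground state on an interval with a Dirichlet end at $b$, decreasing near $b$ --- so I would need $u$ monotone decreasing on all of $[a,b]$, which is exactly the third claim and must be proved first. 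That gives strict inequality $\kappa_{DN} < \kappa_{ND}$ once we also check the competitor is admissible (it vanishes at $a$) and strictness comes from $V_0$ being \emph{strictly} increasing together with $u$ not constant.

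For monotonicity of the eigenfunction (which, as just noted, is needed for the ordering argument and is the load-bearing step), suppose $V_0$ is strictly increasing and $u$ is a first $ND$-eigenfunction with $u(a)>0$; then $u>0$ on $[a,b)$. Consider $w := u'$. Differentiating $-u'' + V_0 u = \kappa_{ND} u$ is awkward since $V_0$ need only be $L^\infty$, so instead I would argue directly: $-u'' = (\kappa_{ND} - V_0(x))u$, and at $x=a$ we have $u'(a)=0$. I claim $u' < 0$ on $(a,b]$. Suppose not; since $u'(a)=0$, either $u'$ has a zero in $(a,b)$ or $u' \ge 0$ somewhere. If $u'$ vanishes at some $c \in (a,b)$ with $u' < 0$ on $(a,c)$, then $u$ is decreasing on $[a,c]$, so $\kappa_{ND} - V_0 = -u''/u$... the slick argument: test the equation against an appropriate comparison. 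Actually the standard fact is that for $-u'' + V_0 u = \kappa u$ with $V_0$ increasing, $u>0$ and $u'(a)=0$, the function $u$ is strictly decreasing because $u''(x) = (V_0(x)-\kappa)u(x)$ and one shows $V_0 - \kappa$ changes sign at most once from $-$ to $+$ (as $V_0$ is increasing), so $u$ is concave then convex; combined with $u'(a)=0$ and $u>0$ on $[a,b)$, $u(b)=0$, this forces $u' \le 0$ throughout, with strictness from $V_0$ strictly increasing. I would make this rigorous by: (i) noting $\kappa_{ND}$ lies strictly between $\inf V_0$ and $\sup V_0$ (lower bound from the equation, upper bound since a constant test function gives energy $\le \fint V_0 < \sup V_0$, but a constant doesn't vanish --- instead use that if $\kappa_{ND} \ge \sup V_0$ then $-u'' \le 0$ so $u$ is concave, $u'(a)=0$ forces $u' \le 0$ and then $u$ decreasing, fine, or if genuinely needed argue by the nodal structure), and (ii) defining $x_* := \sup\{x : V_0(x) < \kappa_{ND}\}$, on $(a,x_*)$ we get $u'' > 0$... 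I see I have a sign issue: $u'' = (V_0 - \kappa_{ND})u$, so where $V_0 < \kappa_{ND}$ we have $u'' < 0$ (concave) and where $V_0 > \kappa_{ND}$ we have $u'' > 0$ (convex). With $u'(a) = 0$: on the initial concave stretch $u' $ decreases below $0$; on the later convex stretch $u'$ increases but, since $u(b)=0$ and $u>0$ just before, $u'(b^-) \le 0$, so $u' < 0$ persists on $(a,b)$. The main obstacle is precisely this monotonicity argument under only $V_0 \in L^\infty$ (no continuity), where "concave/convex" must be read in the sense of $u'$ being monotone on the corresponding subintervals, justified via $u'(y) - u'(x) = \int_x^y (V_0 - \kappa_{ND})u$; I would carry it out by localizing the sign of $V_0 - \kappa_{ND}$ using that $V_0$ is monotone. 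The decreasing case is identical after the reflection $x \mapsto a+b-x$ swapping $ND \leftrightarrow DN$.
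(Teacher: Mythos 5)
Your proof of the identity \eqref{min} and your sketch of the eigenfunction monotonicity are fine, but the ordering $\kappa_{ND}<\kappa_{DN}$ (for $V_0$ strictly increasing) — the part of Lemma \ref{min_lemma} that actually requires an idea — is broken. Note first that your stated conclusion, ``$\kappa_{DN}<\kappa_{ND}$'', is the reverse of what the lemma asserts in the increasing case, which already signals a sign problem. Concretely: reflecting the $ND$-eigenfunction $u$ gives $\tilde u(x)=u(a+b-x)$, admissible for the $DN$-problem, and its energy is $\int_a^b u'^2+V_0(a+b-y)\,u(y)^2\,dy$ (the weight sits on $u(y)^2$, not on $u(a+b-y)^2$ as in your computation). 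Hence the correct difference from $\kappa_{ND}$ is $\int_a^m\bigl(V_0(a+b-y)-V_0(y)\bigr)\bigl(u(y)^2-u(a+b-y)^2\bigr)\,dy\ \geq 0$ when $V_0$ is increasing and $u$ is decreasing. So all you get is $\kappa_{DN}\leq E[\tilde u]$ with $E[\tilde u]\geq\kappa_{ND}$, which yields no comparison at all; and with your (mis-substituted) sign you would get $\kappa_{DN}\leq\kappa_{ND}$, which is false under the hypothesis. To run a trial-function comparison in the right direction you must produce from the $\kappa_{DN}$-eigenfunction an admissible $ND$-competitor with no larger energy; a plain reflection does not do this unless you know a pointwise comparison for the $DN$-eigenfunction that you have not established (and which is not obviously true). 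The paper achieves exactly this transport with the decreasing rearrangement $U^\ast$ of the $\kappa_{DN}$-eigenfunction, using the P\'olya--Szeg\H{o} inequality for the gradient term and the sharp Hardy--Littlewood--P\'olya inequality for the potential term; the equality case of the latter forces $U=U^\ast$ and hence $U\equiv 0$ because $U(a)=0$, giving the strict inequality. Your argument has no substitute for this step.

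The other two parts are correct and genuinely different from the paper. For \eqref{min} you split a trial function at its zero and use domain monotonicity of the one-sided eigenvalues via extension by zero past the Dirichlet endpoint; this is more elementary than the paper's route, which analyzes the minimizer directly (existence, uniqueness of its zero, exclusion of an interior zero by comparison with the second Neumann eigenvalue $\nu_2$). For the monotonicity of the $ND$-eigenfunction your direct argument — $u''=(V_0-\kappa_{ND})u$ with $V_0$ strictly increasing, so $\{V_0<\kappa_{ND}\}$ and $\{V_0>\kappa_{ND}\}$ are intervals meeting in at most a point, $u'$ is monotone on each, and $u'(a)=0$, $u(b)=0$, $u>0$ on $[a,b)$ force $u'<0$ on $(a,b]$ — can be made rigorous for $V_0\in L^\infty$ exactly as you indicate (via $u'(y)-u'(x)=\int_x^y(V_0-\kappa_{ND})u$), and it is an acceptable alternative to the paper, which instead extracts monotonicity from the equality case of the rearrangement inequality. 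But since in the paper the rearrangement machinery is what proves the ordering, and your version of the ordering fails, the lemma is not proved as proposed.
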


\bpf The proof is inspired by a similar result in Bandle et al. \cite{BBR}. Note first that the set, on which the minimization is performed, is weakly closed in $H^1(a,b)$ due to the compact embedding $H^1(a,b)\to C[a,b]$. Hence a minimizer of the right-hand side of \eqref{min} exists. We denote it by $U$. Let us also denote the value of the minimum by $\kappa$. The proof is now divided into five steps:

\smallskip

\noindent
{\em Step 1: $U$ has exactly one zero on $[a,b]$.} Since $U$ possesses at least one zero $x_0\in [a,b]$, we have $U\in H_{x_0}=\{v\in H^1(a,b): v(x_0)=0\}$. Clearly $U$ is then the minimizer of 
$$
\min\left\{ \int_a^b {v'}^2+V_0(x) v^2\,dx: v \in H_{x_0}, \int_a^b v^2\,dx=1\right\},
$$
and therefore $U$ satisfies the Euler-Lagrange equation
\begin{equation}
-U'' + V_0(x) U = \kappa U \mbox{ in } (a,x_0)\cup (x_0,b)
\label{el}
\end{equation}
with boundary condition 
\begin{equation}
U'(a)=U(x_0)=U'(b)=0
\label{bc}
\end{equation}
where in case $x_0\in \{a,b\}$ one of the two Neumann conditions is dropped. Note that 
\begin{equation}
\int_a^b U'v' + V_0(x)Uv\,dx = \kappa \int_a^b Uv\,dx \quad \mbox{ for all } v\in H_{x_0}.
\label{weak_form}
\end{equation}
Now assume for contradiction that $U$ has a second zero $x_1\not = x_0$. Then \eqref{weak_form} holds also for all $v\in H_{x_1}$ and since $H^1(a,b)=H_{x_1}\oplus H_{x_2}$, we find that \eqref{weak_form} holds for all $v\in H^1(a,b)$, i.e., $U$ is a Neumann-eigenfunction. The same applies for $|U|$, which is also a minimizer of \eqref{min}. But then $U$ must be the first Neumann-eigenfunction of $L_0$ on $(a,b)$ and it therefore has no zero on $[a,b]$. This contradiction shows that $U$ has exactly one zero in $[a,b]$. 

\smallskip

\noindent
{\em Step 2: $\kappa$ is strictly less than the second Neumann-eigenvalue $\nu_2$ on $[a,b]$.} Since the second Neumann eigenfunction $\eta_2$ has one zero in $[a,b]$, we find $\kappa\leq \nu_2$. Suppose for contradiction that $\kappa=\nu_2$. Testing the equation for $\eta_2$ with $\eta_2^+=\max\{\eta_2,0\}$ we obtain 
$$
\int_a^b ({\eta_2^+}')^2+ V_0(x)(\eta_2^+)^2\,dx = \nu_2 \int_a^b (\eta_2^+)^2\,dx
$$
and thus $\eta_2^+$ is a minimizer for \eqref{min} and must have a unique zero by Step 1. However, clearly $\eta_2^+$ has a continuum of zeros. Therefore we can conclude that $\kappa<\nu_2$.

\smallskip

\noindent
{\em Step 3: $U$ has its unique zero either at $x=a$ or at $x=b$.} If we suppose for contradiction that the unique zero $x_0$ lies in the open interval $(a,b)$, then we obtain the Euler-Lagrange equation \eqref{el} with boundary condition \eqref{bc}. By rescaling the minimizer $U$ suitably on $[a,x_0]$ we can achieve that the rescaled function $U$ is a $C^1$-function on $[a,b]$ solving the equation \eqref{el} pointwise a.e. on $(a,b)$. Hence, the rescaled function $U$ is a Neumann-eigenfunction with one interior zero, i.e., $\kappa=\nu_2$ in contradiction to Step 2.

\smallskip

Now the claim of the lemma about the value of the minimum is immediate.

\smallskip

\noindent
{\em Step 4: ordering of $\kappa_{ND}, \kappa_{DN}$.} We are using the following rearrangement result of Hardy, Littlewood, P\'{o}lya~\cite{HLP}. Let $v,w$ be non-negative and measurable on $[a,b]$. If $v^\sharp, w^\sharp$ are the increasing rearrangements of $v,w$, then $\int_a^b vw\,dx \leq \int_a^b v^\sharp w^\sharp\,dx$. Moreover, if $v$ is strictly increasing, then equality holds if and only if $w=w^\sharp$. A similar statement holds for the decreasing rearrangements $v^\ast, w^\ast$. Note, that the non-negativity of $v,w$ can be replaced by boundedness.
% A simple corollary of the Hardy, Littlewood, P\'{o}lya inequality is the following: suppose 
% $V=V^\sharp \geq \alpha$ is strictly decreasing and bounded below and $w\leq \beta$ is bounded above. % Then 
% \beq
% \int_a^b Vw^\ast \,dx \leq \int_a^b Vw\,dx
% \label{hlp_ineq}
% \eeq
% with equality if and only if $w=w^\ast$. The proof can be seen as follows: $\int_a^b 
% (V-\alpha)(\beta-w)\,dx \leq \int_a^b (V-\alpha)^\sharp(\beta-w)^\sharp\,dx = \int_a^b 
% (V-\alpha)(\beta-w^\ast)\,dx$. Since $\int_a^b w\,dx = \int_a^b w^\ast\,dx$ this implies 
% \eqref{hlp_ineq}. 

A simple corollary of the Hardy, Littlewood, P\'{o}lya inequality is the following: suppose 
$V=V^\sharp$ is strictly increasing and both $V$ and $w$ are bounded. Then 
\beq
\int_a^b Vw^\ast \,dx \leq \int_a^b Vw\,dx
\label{hlp_ineq}
\eeq
with equality if and only if $w=w^\ast$. The proof follows immediately from the observation that $(-w)^\sharp= -w^\ast$. 

Let $V_0$ be strictly increasing on $[a,b]$. Suppose for contradiction that $\kappa_{DN}\leq \kappa_{ND}$ and let $U$ be an eigenfunction corresponding to $\kappa_{DN}$, which by \eqref{min} is also a minimizer of the variational problem in \eqref{min}. We may assume $U$ to be non-negative, since $|U|$ is also a minimizer of the corresponding variational problem and $\kappa_{DN}$ is a simple eigenvalue. Let now $U^\ast$ be the decreasing rearrangement of $U$ on $[a,b]$ and note that $(U^2)^\ast=(U^\ast)^2$. Since for the decreasing rearrangement we have $\int_a^b(U^{*'})^2dx\leq \int_a^b(U')^2dx$, cf. Kawohl \cite{KAW}, we obtain by \eqref{hlp_ineq} applied to $V_0$ and $U^2$ the relations 
\begin{equation}
\int_a^b (U^\ast)^2\,dx = \int_a^b U^2\,dx=1, \qquad \int_a^b ({U^\ast}')^2+ V_0(x) (U^\ast)^2\,dx 
\leq \int_a^b (U')^2+ V_0(x) U^2\,dx.
\label{re}
\end{equation}
Therefore $U^\ast$, which satisfies $U^\ast(b)=0$, is also a minimizer of \eqref{min} and hence equality has to hold in \eqref{re}.  But since $V_0$ is strictly increasing, the sharp form of \eqref{hlp_ineq} implies that $U=U^\ast$ which by $U(a)=0$ implies the contradiction that $U$ must be identically zero. Hence $\kappa_{ND}<\kappa_{DN}$. Moreover, \eqref{re} shows that any non-negative minimizer $U$ for $\kappa_{ND}$ satisfies $U=U^\ast$, i.e., $U$ is decreasing, and by using the differential equation for $U$ and the strict monotonicity of $V_0$ it is easy to see that in fact $U$ is strictly decreasing.

If $V_0$ is strictly decreasing on $[a,b]$ then a similar argument based on replacing $U$ by its increasing rearrangement shows that $\kappa_{DN}<\kappa_{ND}$. 
\epf

\begin{proof}[Proof of Lemma \ref{L:ordering}] Consider the Dirichlet-eigenfunction $\zeta_1$. By Lemma \ref{d_half} its restriction to $[0,\frac{d}{2}]$ is the eigenfunction for $\kappa_{DN}$ of Lemma \ref{min_lemma}. Likewise, the restriction of $\eta_2$ to $[0,\frac{d}{2}]$ is the eigenfunction for $\kappa_{ND}$. Hence $\mu_1=\kappa_{DN}$ and $\nu_2=\kappa_{ND}$. The statements (a) and (b) then follow from Lemma~\ref{min_lemma}.
\end{proof}

\section{Interface Problems}\label{S:jump}

Let $L$ be the operator in \eqref{E:L_prob} defined on the dense subset $H^2(\R)$ of $L^2(\R)$. We investigate next the existence of eigenvalues of $L$ for the interface potentials \eqref{E:even_interf} and \eqref{E:transl_interf}. These examples fall into a larger class of potentials, namely $V(x)=\chi_{\{x<0\}}V_1(x)+\chi_{\{x\geq 0\}}V_2(x)$, where $V_{1,2}(x+d_{1,2})=V_{1,2}(x)$ for some $d_{1,2}\geq 0$ but where $V_{1,2}$ may not be even in $x$. Clearly, all solutions of $(-\partial_x^2+V(x))\psi=\lambda \psi$ are then
\[\psi(x)=\chi_{\{x<0\}}\psi_-(x)+\chi_{\{x\geq 0\}}\psi_+(x),\]
where $\psi_\pm$ are Bloch functions of $(-\partial_x^2+V_{1,2}(x))\psi=\lambda \psi$, respectively. As decaying Bloch functions $\psi_{\pm}$ exist only in spectral gaps of $-\partial_x^2+V_{1,2}(x)$, respectively, eigenvalues of $L$ can exist only within intersections of the gaps of $\sigma(-\partial_x^2+V_{1}(x))$ and $\sigma(-\partial_x^2+V_{2}(x))$. Note the following additional information on the spectrum of $L$, which for our purpose plays no further role: the essential spectrum of $L$ is the union of the essential spectra of $-\partial_x^2+V_1(x)$ and $-\partial_x^2+V_2(x)$, cf. Korotyaev \cite{KOR05}. As a result, no embedded eigenvalues of $L$ exist.

\subsection{Point Spectrum for Interfaces Made of Even Potentials}\label{S:jump_even_pot}

The eigenvalue problem \eqref{E:L_prob} with \eqref{E:even_interf} can be viewed as the system
\beq\label{E:L_system}
\begin{array}{ll}
L_- \psi := -\partial_x^2 \psi + V_-(x) \psi  = \lambda \psi & \text{for} \ x < 0, \\
L_+ \psi := -\partial_x^2 \psi + V_+(x) \psi = \lambda \psi & \text{for} \ x \geq 0 \\
\end{array}
\eeq
coupled by the $C^1$-matching conditions 
\beq\label{E:match}
\psi(0-)=\psi(0+) \qquad \text{and} \qquad  \psi'(0-) = \psi'(0+).
\eeq
As stated in Section \ref{S:intro}, the functions $V_\pm(x)$ are continuous, even and $d_\pm$-periodic.

Based on the knowledge of the fundamental solutions in \eqref{E:Bloch_form}, \eqref{E:Bloch_form_bdry} we conclude that an $L^2$-integrable solution of \eqref{E:L_prob} with \eqref{E:even_interf} can only exist if $\lambda$ lies in the intersection of the resolvent sets, i.e., in the intersection of the spectral gaps of $L_-$ and $L_+$, i.e., if $\lambda \in G_n^+\cap G_m^-$ for some $n,m \in \N\cup \{0\}$, where $G_n^\pm$ is the $n$-th spectral gap of $L_\pm$ respectively. 

For $\lambda\in G_n^+\cap G_m^-$ with some $n,m\geq 0$ any localized eigenfunction $\psi$ of $L$, therefore, has to be of the form
\[\psi(x;\lambda)  = \chi_{\{x<0\}}\psi_-(x;\lambda) + \chi_{\{x\geq 0\}}\psi_+(x;\lambda), \]
where
\beq\label{E:decaying_parts}
\psi_\pm(x;\lambda) = p_\pm(x;\lambda)e^{\mp\kappa(\lambda)x}
\eeq
with $\kappa(\lambda)>0$ and $p_\pm(x;\lambda)$ being $2d_\pm-$periodic in $x$. The functions $p_\pm$ are restrictions of either $p_1$ or $p_2$ in \eqref{E:Bloch_form} with $V_0=V_\pm$ to the half-line $\R_\pm$ respectively.

An important remark is that, due to the linearity of the problem, the matching conditions \eqref{E:match} together with an appropriate scaling are equivalent to
\beq\label{E:R_cond}
R_+(\lambda)=R_-(\lambda), \quad\mbox{ where } \quad R_\pm(\lambda) = \frac{\psi'_{\pm}(0;\lambda)}{\psi_{\pm}(0;\lambda)}
\eeq
and the prime denotes differentiation in $x$.

We determine existence of solutions to \eqref{E:R_cond} via the intermediate value theorem and by monotonicity of the functions $R_\pm(\lambda)$. The monotonicity then also implies uniqueness.

\blem\label{L:G_monotone}
Within each gap $G_n^+$ and $G_n^-, n\geq 0$, the functions $R_+$ and $R_-$ are continuous functions of $\lambda\in G_n^\pm$, which are strictly increasing and decreasing respectively.
\elem
\bpf \ Let us start with the proof for $R_+(\lambda)$. Under the Pr\"{u}fer transformation, cf. Coddington \& Levinson \cite{CL}  
$$\psi_+(x;\lambda) = \rho(x;\lambda)\sin(\theta(x;\lambda)), \qquad \psi'_+(x;\lambda) = \rho(x;\lambda)\cos(\theta(x;\lambda)),$$ 
the equation $ L_+\psi_+=\lambda \psi_+$ becomes
\[
\begin{split}
\theta' & = 1+(\lambda - V_+(x)-1)\sin^2(\theta),\\
\rho' & = -\rho(\lambda - V_+(x)-1)\sin(\theta)\cos(\theta),
\end{split}
\]
where the prime denotes differentiation in $x$. Clearly, $\theta$ and $\rho$ are continuous functions of both variables $x\in \R$ and $\lambda\in G_n^+$ and since $R_+(\lambda)=\cot(\theta(0;\lambda))$, the function $R_+(\lambda)$ is continuous in $\lambda$ provided $\psi_+(0;\lambda)$ has no zero in the interior of $G_n^+$. Note that if $\psi_+(0;\lambda)=0$, then by evenness of $V_+$ and the reflection symmetry of the problem $L_+\psi_+ = \lambda \psi_+$, the solution $\psi_+(x;\lambda)$ defined in \eqref{E:decaying_parts} on $x\geq 0$ could be extended to a solution on $x\in \R$ via $\psi_+(-x;\lambda)=-\psi_+(x;\lambda)$. This solution would decay exponentially at both infinities and $\lambda$ would, thus, be an eigenvalue of $L_+$, which is impossible. Hence continuity of $R_+(\lambda)$ is proven.

Now let us prove the monotonicity. Due to the form of $\psi_+$, see \eqref{E:decaying_parts}, we have
\beq\label{E:rho_2d}
\rho(2d_+) = \sqrt{(\psi_+(2d_+))^2+(\psi'_+(2d_+))^2} = e^{-2d_+\kappa}\rho(0).
\eeq
Define now $z(x):=\frac{\partial \theta}{\partial \lambda}(x;\lambda)$. The function $z$ satisfies $z' = z(\lambda-V_+(x)-1)2\sin(\theta)\cos(\theta) +\sin^2(\theta)=-2\frac{\rho'}{\rho}z+\sin^2(\theta)$. Therefore, 
\beq\label{E:z_var_const}
z(x) = \left(\frac{\rho(0;\lambda)}{\rho(x;\lambda)}\right)^2z(0) +\int_0^x \left(\frac{\rho(t;\lambda)}{\rho(x;\lambda)}\right)^2 \sin^2(\theta(t;\lambda))\,dt.
\eeq
Because $\cot(\theta) = \frac{\psi_+'}{\psi_+}$, and due to the periodicity $\frac{\psi_+'(x+2d_+;\lambda)}{\psi_+(x+2d_+;\lambda)}=\frac{\psi_+'(x;\lambda)}{\psi_+(x;\lambda)}$ we have $\theta(2d_+;\lambda)=\theta(0;\lambda)+m\pi$, where due to continuity the value $m\in \Z$ is independent of $\lambda$.\footnote{In fact, it can be easily seen from Sturm oscillation theorem that $m=2n$, where $n$ is the index of the gap $G_n^+$.} Hence, $z(2d_+;\lambda)=z(0;\lambda)$. Using \eqref{E:rho_2d} and \eqref{E:z_var_const}, we thus obtain
\beq\label{E:z}
z(0)=z(2d_+) = e^{4d_+ \kappa}z(0) + \int_0^{2d_+}\left(\frac{\rho(t;\lambda)}{\rho(2d_+;\lambda)}\right)^2\sin^2(\theta(t;\lambda))\,dt.
\eeq
Because $\kappa>0$, we get $z(0)<0$ and conclude that $\theta(0;\lambda)$ is strictly decreasing throughout $G_n^+$. Therefore, $R_+(\lambda) = \cot(\theta(0;\lambda))$ is strictly increasing with respect to $\lambda$ throughout $G_n^+$.

In order to prove strict monotonicity of $R_-(\lambda)$, note that \eqref{E:rho_2d} is replaced by 
$\rho(-2d_-)=e^{-2d_-\kappa}\rho(0)$ and in \eqref{E:z} the value $2d_+$ is replaced by $-2d_-$ both in the arguments of the functions $z$ and $\rho$ and in the upper limit of the integral. This leads to the conclusion $z(0)>0$ which means that $R_-(\lambda)$ is strictly decreasing with respect to $\lambda$.
\epf

In order to apply the intermediate value theorem and prove crossing of the graphs of $R_+(\lambda)$ and $R_-(\lambda)$, we use their continuity within each gap and their limits as $\lambda$ approaches a gap edge.

\begin{lemma} \label{L:R_lim_vals}
%Let $V_\pm$ be bounded, even, $d$-periodic potentials. 
Let $s\in \{s_1,s_2,\ldots\}$ be one of the boundary-points of the spectral gaps $G_n^\pm$ of $L_\pm$ respectively. If $s$ corresponds to a Dirichlet-eigenvalue of $L_\pm$ on $[0,d]$, then $\lim_{\lambda\to s, \lambda\in G_n^\pm}|R_\pm(\lambda)|=|R_\pm(s)|=\infty$ respectively, and if $s$ corresponds to a Neumann-eigenvalue of $L_\pm$ on $[0,d]$ then $\lim_{\lambda\to s, \lambda\in G_n^\pm} R_\pm(\lambda)=R_\pm(s)=0$ respectively.
\end{lemma}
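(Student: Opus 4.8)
The plan is to analyze the behavior of $R_\pm(\lambda) = \psi_\pm'(0;\lambda)/\psi_\pm(0;\lambda)$ as $\lambda$ tends to a gap edge $s$, by connecting the decaying Bloch function $\psi_\pm$ at the edge to the bounded Bloch function given by \eqref{E:Bloch_form_bdry}. The key observation is that at a band edge $s$, the only bounded solution (up to scaling) of $L_\pm u = s u$ is $p_1(\cdot;s)$, which by evenness of $V_\pm$ is either even or odd about $x=0$, hence satisfies either $p_1(0;s)=0$ (Dirichlet-type, i.e.\ $s$ is a Dirichlet eigenvalue of $L_\pm$ on $[0,d_\pm]$ after noting the periodic/antiperiodic extension) or $p_1'(0;s)=0$ (Neumann-type). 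I would first establish that $\psi_\pm(\cdot;\lambda)$ depends continuously on $\lambda$ up to and including the gap edge, and that $\psi_\pm(\cdot;s)$ equals (a scalar multiple of) $p_1(\cdot;s)$. This continuity is most cleanly obtained via the Prüfer variables already introduced in the proof of Lemma~\ref{L:G_monotone}: $\theta(x;\lambda)$ and $\rho(x;\lambda)$ are continuous in $(x,\lambda)$ on all of $\R\times\overline{G_n^\pm}$, since the Prüfer ODEs have coefficients depending continuously (indeed smoothly) on $\lambda$ including at the edge.

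Second, I would identify the limit of $\theta(0;\lambda)$ as $\lambda\to s$. By Lemma~\ref{L:G_monotone}, $\theta(0;\lambda)$ is strictly monotone (decreasing in $G_n^+$, increasing in $G_n^-$) and bounded on the gap, hence has a limit $\theta(0;s)$. Since $\psi_\pm(x;\lambda)\to p_1(x;s)$ uniformly on compact sets (up to normalization), and since $p_1(\cdot;s)$ is the bounded edge solution, we get $\cot(\theta(0;s)) = p_1'(0;s)/p_1(0;s)$ whenever $p_1(0;s)\neq 0$. In the Dirichlet-type case $p_1(0;s)=0$ while $p_1'(0;s)\neq 0$ (else $p_1\equiv 0$), so $\theta(0;s)\in\pi\Z$ and $|R_\pm(\lambda)|=|\cot(\theta(0;\lambda))|\to\infty$; in the Neumann-type case $p_1'(0;s)=0$ while $p_1(0;s)\neq 0$, so $\theta(0;s)\in\frac{\pi}{2}+\pi\Z$ and $R_\pm(\lambda)=\cot(\theta(0;\lambda))\to 0$. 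One must also rule out that $\psi_\pm(\cdot;\lambda)$ degenerates, e.g.\ that $\psi_\pm(0;\lambda)\to 0$ prematurely inside the gap — but this was already excluded in Lemma~\ref{L:G_monotone}, where it was shown $\psi_\pm(0;\lambda)\neq 0$ in the open gap (otherwise $\lambda$ would be an eigenvalue of $L_\pm$, impossible). The monotonicity moreover pins down the sign of the blow-up in the Dirichlet case, which is what the statement's $|R_\pm(s)|=\infty$ records.

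The main subtlety — and what I expect to be the chief obstacle — is the continuous dependence of the \emph{decaying} Bloch function on $\lambda$ up to the gap edge, with the correct normalization so that it converges to $p_1(\cdot;s)$ rather than vanishing or blowing up. Inside the gap, $\psi_\pm(x;\lambda)=p_\pm(x;\lambda)e^{\mp\kappa(\lambda)x}$ with $\kappa(\lambda)\to 0$ as $\lambda\to s$, so $\psi_\pm$ ceases to decay in the limit and merges with the bounded (not the linearly growing) branch in \eqref{E:Bloch_form_bdry}. The clean way around this is to work entirely with the scale-invariant quantity $R_\pm(\lambda)=\cot\theta(0;\lambda)$: the Prüfer angle $\theta(0;\lambda)$ is defined by the same ODE for all $\lambda$ in the closed gap, it is continuous and monotone, and its limit $\theta(0;s)$ can be computed from the eigenfunction characterization of the edge. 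Concretely, one picks the Prüfer angle at $x=0$ for the edge solution $p_1(\cdot;s)$ and checks, using the monodromy/Sturm-oscillation count (the footnote's $m=2n$) together with the Dirichlet/Neumann eigenfunction properties from Lemma~\ref{d_half}, that $\theta(0;s)$ lands in $\pi\Z$ or in $\frac{\pi}{2}+\pi\Z$ exactly as claimed. The computation that the limit is attained and equals $R_\pm(s)$ then follows from continuity of $\cot$ away from its poles and the explicit boundary behavior at the poles. Everything else is routine bookkeeping with the Prüfer equations.
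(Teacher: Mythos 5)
You correctly isolate the crux (the behavior of the decaying Bloch solution as $\lambda$ reaches the edge), but your proposal never actually establishes it. The assertion that $\theta(\cdot;\lambda),\rho(\cdot;\lambda)$ are continuous on $\R\times\overline{G_n^\pm}$ ``because the Pr\"ufer ODE coefficients depend continuously on $\lambda$'' is circular: the relevant $\lambda$-dependence enters through the initial angle $\theta(0;\lambda)$, i.e.\ through the direction of the decaying Floquet solution, and its continuity up to the gap edge (where the two Floquet multipliers collide) is precisely what the lemma claims. Monotonicity plus confinement of $\theta(0;\lambda)$ to one interval $(k\pi,(k+1)\pi)$ (from $\psi_\pm(0;\lambda)\neq0$ inside the gap) does give existence of a one-sided limit $\theta^*$ --- that part is fine --- but nothing in your sketch shows that $\cot\theta^*=p_1'(0;s)/p_1(0;s)$ rather than the angle of some other direction, e.g.\ that of the linearly growing solution $p_2+xp_1$ in \eqref{E:Bloch_form_bdry}. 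The statement ``$\psi_\pm(\cdot;\lambda)\to p_1(\cdot;s)$ uniformly on compact sets (up to normalization)'' is exactly the unproved step, and your proposed workaround (``checks, using the monodromy/Sturm-oscillation count together with Lemma~\ref{d_half}'') is a gesture, not an argument; the footnote's $m=2n$ does not by itself pin down the limiting direction.

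There are two ways to close the gap. The paper's proof is a normalization-and-compactness argument: write $\psi_+(x;\lambda_k)=p_+(x;\lambda_k)e^{-\kappa_k x}$ with $\|p_+(\cdot;\lambda_k)\|_{L^\infty}=1$, obtain uniform $H^2$ bounds on compacta, extract a subsequence converging in $C^1_{\rm loc}$ to a bounded, nontrivial solution of $L_+v=sv$ on $[0,\infty)$, which by \eqref{E:Bloch_form_bdry} must be a multiple of $p_1(\cdot;s)$; then $R_+(\lambda_k)\to p_1'(0;s)/p_1(0;s)$, giving $0$ in the Neumann case and blow-up in the Dirichlet case. Alternatively, your Pr\"ufer route can be completed by passing the quasi-periodicity relation $\theta(2d_+;\lambda)=\theta(0;\lambda)+2n\pi$ to the limit $\lambda\to s$ (continuous dependence of the Pr\"ufer flow on the initial angle and on $\lambda$): the solution of $L_\pm u=su$ with initial angle $\theta^*$ then has a monodromy-invariant direction, and since at the edge of an open gap the monodromy matrix has only the single eigendirection $(p_1(0;s),p_1'(0;s))^T$ (again \eqref{E:Bloch_form_bdry}), $\theta^*$ is the angle of $p_1(\cdot;s)$; Lemma~\ref{d_half} and the evenness of $V_\pm$ then place $\theta^*$ in $\pi\Z$ (Dirichlet) or $\tfrac{\pi}{2}+\pi\Z$ (Neumann). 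Without one of these completions the proposal does not prove the lemma.
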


\bpf 
We only consider the ``$+$'' case. Let $\lambda_k\in G_n^+$, $\lambda_k\to s$ be a given sequence. Due to \eqref{E:decaying_parts} the functions $\psi_+(\cdot,\lambda_k)$ have the form
$$
\psi_+(x;\lambda_k)=p_+(x;\lambda_k)e^{-\kappa_k x},
$$
where w.l.o.g. we may assume $\|p_+(\cdot;\lambda_k)\|_{L^\infty}=1$, which implies $\|\psi_+(\cdot;\lambda_k)\|_{L^\infty([0,\infty))}\leq 1$. On every compact subinterval $[0,b]\subset[0,\infty)$ the $H^2$-norm of $\psi_+(\cdot,\lambda_k)$ is uniformly bounded in $k$ and hence along a subsequence (again denoted by $\lambda_k$) the functions $\psi_+(\cdot;\lambda_k)$ converge in $H^1([0,b])$ (and hence, by the differential equation \eqref{E:L_system} also in $H^2([0,b])$) to a solution $v$ of $L_+ v = sv$ with $\|v\|_{L^\infty([0,b])}\leq 1$. Since this holds for every $b>0$, the function $v$ is a bounded solution of $L_+ v=sv$ on $[0,\infty)$ and therefore coincides with the bounded periodic Bloch function $p_1(x;s)$ in \eqref{E:Bloch_form_bdry}. The convergence of $R_+(\lambda_k)$ is now obvious by the embedding $H^2([0,b])$ into $C^1([0,b])$. 
\epf

%Let $\lambda$ be inside the gap. On compact subintervals of $[0,\infty)$, $(-\infty,0]$ the $H^2$-norm %of $\psi_\pm(\cdot,\lambda)$ in \eqref{E:decaying_parts} is bounded in $\lambda$ and hence along a %sequence $\lambda_k\to s, \lambda_k\in G_n$ the functions $\psi_\pm(\cdot,\lambda_k)$ are convergent in %$H^1$ to a bounded solution $v$ of $L_\pm v= sv$. Therefore $v$ coincides with the bounded, periodic %Bloch function $p_1(x;s)$ in \eqref{E:Bloch_form_bdry}. The rest of the statement is obvious, cf. Lemma %\ref{d_half}. 
%Then $\kappa(\lambda) \rightarrow 0$, see Allair and Orive \cite{AO05}. Furthermore %$\psi_\pm(x;\lambda)$ in \eqref{E:decaying_parts} then both converge locally uniformly on $[0,\infty)$, %$(-\infty,0]$ to a bounded solution $v$ of $L_\pm v = s v$ respectively. Hence $v$ is bounded and %therefore coincides with the bounded, periodic Bloch function $p_1(x;s)$. The rest of the statement is %obvious. 

% Due to the symmetries \eqref{E:Bloch_sym} of $p_1(x;s_n)$ we have
% \beq\label{E:R_lim}
% R_\pm(s_{4n-3})=R_\pm(s_{4n-2})=0 \quad \text{and} \quad  |R_\pm(s_{4n-1})|=|R_\pm(s_{4n})|=\infty, % \quad n\geq 1.
% \eeq
% To determine the sign of $R_\pm(s_{4n-1})$ and $R_\pm(s_{4n})$, we use the monotonicity properties of % Lemma \ref{L:G_monotone}. As a result
% \beq\label{E:R_lim_2}
% R_\pm(s_{4n-1})= \pm \infty, \qquad R_\pm(s_{4n})=\mp \infty, \quad n\geq 1.
% \eeq
% We stress here that the evenness of $V(x)$ is crucial for obtaining the symmetries \eqref{E:Bloch_sym} % and thus for the above limits of $R_\pm$. 

To make the picture of the behavior of $R_\pm$ complete, it remains to determine their behavior at the lower end of the semi-infinite gap $G_0^\pm$, i.e. as $\lambda \rightarrow -\infty$.

\blem\label{L:R_asympt_infty}
Let $V_\pm$ be bounded potentials (not necessarily even, periodic or continuous). Then $R_\pm(\lambda)\rightarrow \mp \infty$ \ as \ $\lambda \rightarrow -\infty$.
\elem
\bpf \ The proof is, as for Lemma \ref{L:G_monotone}, shown only for $R_+$ with the one for $R_-$ being completely analogous. We rescale the Bloch function $\psi_+(x;\lambda)$ so that $\psi_+(0;\lambda)=1$. Note that this is possible if and only if $\psi_+(0;\lambda)\neq 0$, which we show to be true for all $\lambda \leq \inf V_+$. Suppose that $\psi_+(0;\lambda)= 0$. Testing $(L_+-\lambda)\psi_+=0$ with $\psi_+$ over $x\in [0,\infty)$, we get
$$\int_0^\infty (\psi_+')^2dx+\int_0^\infty(V_+-\lambda)\psi_+^2 dx = 0$$
and, therefore, $\lambda > \inf V_+$.

Let now $\lambda=-\nu^2$ for some $\nu >0$, s.t. $-\nu^2\in G_0^+$ and $-\nu^2\leq \inf V_+$, and define 
\beq \label{E:phi_def}
\phi_\nu(x):=\psi_+(x;-\nu^2)-e^{-\nu x}.
\eeq
We have 
\beq \label{E:phi_eq}
\phi_\nu''=\nu^2 \phi_\nu +V_+ \psi_+, \qquad \phi_\nu(0)=0.
\eeq

Since $R_+(\lambda)=R_+(-\nu^2)=\psi_+'(0;-\nu^2)=-\nu +\phi_\nu'(0)$, we need to determine the behavior of $\phi_\nu'(0)$ as $\nu \rightarrow \infty$. Using the Green's function, we solve \eqref{E:phi_eq} to obtain
$$\phi_\nu(x)=-\frac{1}{\nu}\left(e^{-\nu x}\int_0^x\sinh(\nu t)V_+(t)\psi_+(t;-\nu^2)dt +\sinh(\nu x)\int_x^\infty e^{-\nu t}V_+(t)\psi_+(t;-\nu^2)dt\right).$$
Therefore, $\phi_\nu'(0)  = -\int_0^\infty e^{-\nu t} V_+(t)\psi_+(t;-\nu^2)\,dt$ and
\beq \label{E:phi_prime_est}
|\phi_\nu'(0)|\leq \|V_+\|_{L^\infty} \|e^{-\nu \cdot }\|_{L^2(0,\infty)}\|\psi_+(\cdot;-\nu^2)\|_{L^2(0,\infty)} = \frac{\|V_+\|_{L^\infty}}{\sqrt{2\nu}}\|\psi_+(\cdot;-\nu^2)\|_{L^2(0,\infty)}.
\eeq
In order to estimate $\|\psi_+(\cdot;-\nu^2)\|_{L^2(0,\infty)}$, \eqref{E:phi_eq} yields
$$\nu^2 \|\phi_\nu\|^2_{L^2(0,\infty)} = -\|\phi_\nu'\|^2_{L^2(0,\infty)} -\int_0^\infty V_+(x)\psi_+(x;-\nu^2)\phi_\nu(x)dx,$$
implying $\nu^2 \|\phi_\nu\|^2_{L^2(0,\infty)} \leq \|V_+\|_{L^\infty} \|\psi_+(\cdot;-\nu^2)\|_{L^2(0,\infty)} \|\phi_\nu\|_{L^2(0,\infty)}$ and
\beq \label{E:phi_est}
\|\phi_\nu\|_{L^2(0,\infty)} \leq \frac{1}{\nu^2} \|V_+\|_{L^\infty} \|\psi_+(\cdot;-\nu^2)\|_{L^2(0,\infty)}.
\eeq
Therefore \eqref{E:phi_def} and \eqref{E:phi_est} together give
$\|\psi_+(\cdot;-\nu^2)\|_{L^2(0,\infty)} \leq   \frac{1}{\sqrt{2\nu}}+ \frac{1}{\nu^2} \|V_+\|_{L^\infty} \|\psi_+(\cdot;-\nu^2)\|_{L^2(0,\infty)}$. If $\nu^2>\|V_+\|_{L^\infty}$, we have the estimate
\beq \label{E:psi_plus_est}
\|\psi_+(\cdot;-\nu^2)\|_{L^2(0,\infty)} \leq \frac{(2\nu)^{-1/2}}{1-\nu^{-2} \|V_+\|_{L^\infty}}.
\eeq
Finally, combining \eqref{E:psi_plus_est} and \eqref{E:phi_prime_est}, we arrive at the bound
$$|\phi_\nu'(0)|\leq  \frac{(2\nu)^{-1}\|V_+\|_{L^\infty}}{1-\nu^{-2}\|V_+\|_{L^\infty}},$$
which implies $R_+(-\nu^2)=-\nu+\phi_\nu'(0) \rightarrow -\infty$ as $\nu \rightarrow \infty$.
\epf

The behavior of the ratio functions $R_\pm(\lambda)$ for the two examples  $V_+=V_-=\sin^2(\pi x/10)$ and $V_+=V_-=\cos^2(\pi x/10)$ is summarized in Figure \ref{F:R_behavior}. Note that Lemmas \ref{L:ordering}, \ref{L:G_monotone}, \ref{L:R_lim_vals} and \ref{L:R_asympt_infty} imply the behavior only for $\lambda \leq s_3$. The rest in Figure~\ref{F:R_behavior} is obtained without a rigorous proof from numerical computations of the gap edge eigenfunctions.
\begin{figure}[!ht]
\begin{center}
\includegraphics[scale=.6]{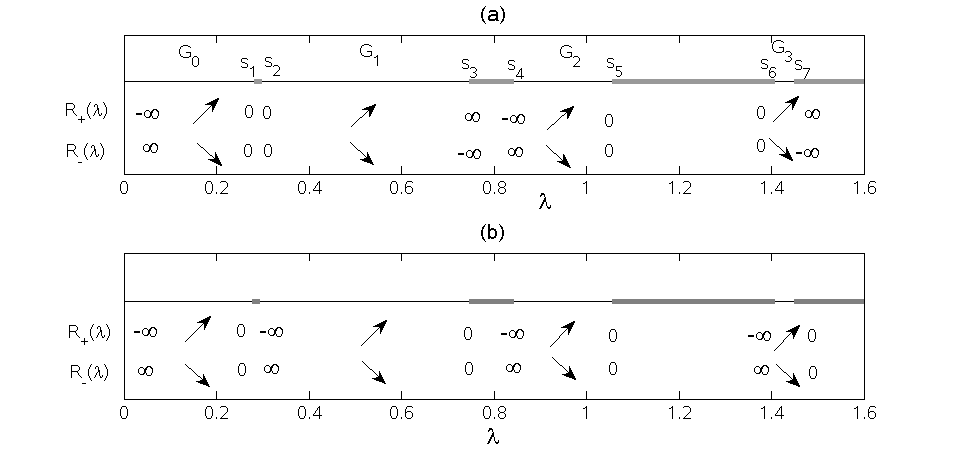}
\end{center}
\caption{Behavior of the ratio functions $R_\pm(\lambda)$ within gaps of $\sigma(L_\pm)$ for $V_+=V_-=\sin^2(\pi x/10)$ in (a) and for $V_+=V_-=\cos^2(\pi x/10)$ in (b). The arrows denote the monotonicity type.}
\label{F:R_behavior}
\end{figure}

By the intermediate value theorem and based on the behavior of $R_\pm$, we now obtain the following theorem, which has already been observed by Korotyaev \cite{KOR05}. 

\bthm\label{T:main} Let $G^-_n, G^+_m$ be two gaps in the spectrum of $L_-$ and $L_+$ respectively, such that $G^-_n \cap G^+_m \neq \emptyset$. Then the following two statements are equivalent:
\bi
\item[(a)] $\exists \ \lambda \in G^-_n \cap G^+_m$ \ such that $\lambda$ is an eigenvalue of $L$.
\item[(b)] Either $G^-_n=(\mu_n^-,\nu_{n+1}^-), G^+_m=(\nu_{m+1}^+,\mu_m^+)$ or $G^-_n=(\nu_{n+1}^-,\mu_n^-), G^+_m=(\mu_m^+,\nu_{m+1}^+)$.
\ei
In the affirmative case the eigenvalue is also unique.
\ethm

\noindent
{\em Remark.} Note that besides continuity and the limit values of $R_\pm(\lambda)$ their monotonicity is also needed to fulfill the conditions of the intermediate value theorem. Without monotonicity the ranges of the functions $R_+(\lambda)$ and $R_-(\lambda)$ on the intersection $G^-_n \cap G^+_{m}$ could be completely distinct, see Figure~\ref{F:R_intersec}~(a).
\begin{figure}[!ht]
\begin{center}
\includegraphics[scale=.5]{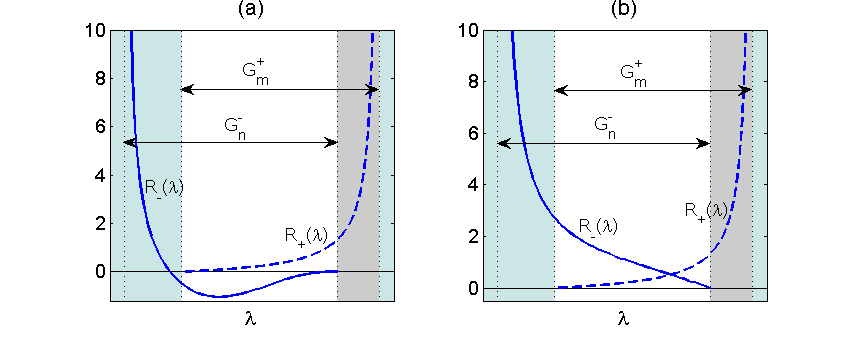}
\end{center}
\caption{A cartoon of the graphs of $R_-(\lambda)$ and $R_+(\lambda)$ when the former one of the two conditions in Theorem~\ref{T:main} holds. (a) No solution to \eqref{E:R_cond} without monotonicity of $R_-$ (hypothetical case). (b) Existence and uniqueness of the solution to \eqref{E:R_cond} on any $G^-_n \cap G^+_m$ with monotonicity of $R_\pm$.}
\label{F:R_intersec}
\end{figure}
With monotonicity of $R_\pm(\lambda)$ we, of course, obtain also uniqueness of solutions to~\eqref{E:R_cond}.

Let us call the gap $(\mu_n^\pm,\nu_{n+1}^\pm)$ a DN-gap and the gap $(\nu_{n+1}^\pm,\mu_n^\pm)$ an ND-gap. The semi-infinite gap belongs to the class of DN-gaps. The existence part of Theorem \ref{T:main} can then be formulated as follows:
\begin{quotation}
\noindent
\textit{
Whenever a DN/ND-gap of $L_-$ intersects an ND/DN-gap of $L_+$, respectively, a unique eigenvalue of $L$ exists in this intersection. 
}
\end{quotation}

\subsubsection{Example: additive interface}\label{S:example_add_jump}

The additive interface problem \eqref{E:L_prob} with \eqref{E:add_interf} is equivalent to \eqref{E:L_system} with $L_-=L_0:=-\partial_x^2+V_0(x)$ and $L_+=L_0+\alpha$.

Because $\sigma(L_0+\alpha)=\sigma(L_0)+\alpha$, we have $G_n^+=G_n^-+\alpha$, and because the Bloch functions of $L_0$ at the spectral parameter $\lambda$ are the same as the Bloch functions of $L_0+\alpha$ at $\lambda+\alpha$, to check the conditions of Theorem \ref{T:main}, one only needs to know $\sigma(L_0)$ and symmetries (even/odd) of the Bloch functions of $L_0$ at the gap edges $s_n$.

The existence part of Theorem \ref{T:main} can now be formulated as follows:
\begin{quotation}
\noindent
\textit{
Whenever $\alpha$ shifts the spectrum of $L_0$ so that a shifted DN/ND-gap intersects an (unshifted) ND/DN-gap, respectively, a unique eigenvalue of $L$ exists in this intersection. 
}
\end{quotation}

Theorem \ref{T:main} has several interesting and rather specific corollaries for the additive interface case. Firstly, clearly, if $|\alpha|<\alpha_*$, where $\alpha_*:=\inf_{n\in\N}(s_{2n}-s_{2n-1})$ stands for the width of the narrowest spectral band of $L_0$, the shift $\alpha$ is too small to make even the two gaps lying closest to each other overlap.
\bcor\label{C:unique_eval}
If $|\alpha|<\alpha_*:=\inf_{n\in\N}(s_{2n}-s_{2n-1})$, then $L$ has no eigenvalues.
\ecor

In the rest of this section $G_n$ denotes the $n$-th spectral gap of $L_0$.
As Lemma \ref{L:ordering} dictates, when $V_0$ is strictly increasing on $[0,d/2]$, the first finite gap $G_1=(s_2,s_3)$ is an ND-gap and thus if $\alpha$ shifts the semi-infinite (DN) gap $G_0$ so that $G_0+\alpha$ intersects $G_1$, an eigenvalue exists. Obviously, the infimal value of $\alpha>0$ achieving such an intersection is the width of the first spectral band $s_2-s_1$. Since $G_0$ is semi-infinite, there is no upper bound on $\alpha$ and if $\alpha>s_2-s_1$, the intersection is always nonempty. On  the other hand, when $V_0$ is decreasing on $[0,d/2]$, $G_1$ is a DN-gap and the intersection of $G_0+\alpha$ and $G_1$ contains no eigenvalues. As the next Corollary clarifies, for $\alpha<-(s_2-s_1)$ the situation is similar.
\bcor
Let $V_0$ be strictly increasing/strictly decreasing on $[0,\frac{d}{2}]$. If $|\alpha|> s_2-s_1$ then a unique eigenvalue/no eigenvalue of $L$ exists in $G_1\cap (G_0+\alpha)$ for $\alpha>0$ and in $G_0\cap (G_1+\alpha)$ for $\alpha<0$. 
\ecor

{\em Remark.}  For the case of the additive interface it is possible to show that \textit{the number of eigenvalues} of $L$ \textit{is finite} for any $\alpha \in \R$ based on the asymptotic behavior of gap locations and gap widths. Indeed, based on Theorem 4.2.2 in \cite{Eastham} the center of the $n$-th gap behaves like $cn^2 + o(n)$ as $n \rightarrow \infty$ with the constant $c\in \R$ dependent on $V_0$. The gap widths, on the other hand, tend to $0$ since they build an $l^2$ sequence, see Theorem 3 in \cite{GT84}. Therefore, asymptotically, the $n-$th gap has the form $c n^2 + J_n$, where both $\text{inf}(J_n)$ and $\text{sup}(J_n)$ behave like $o(n)$. For a given $\alpha \in \R$ infinitely many eigenvalues are thus possible only if 
for infinitely many pairs $(m,n) \in \N \times \N$ with $n\neq m$ there exist $s_n\in J_n$ and $t_m \in J_m$ such that 
\beq\label{E:inf_many_ev}
c n^2 + s_n = c m^2 +t_m + \alpha.
\eeq

As for $n=m$ no eigenvalues exist, we can rewrite \eqref{E:inf_many_ev} as
\[c(n+m)=\frac{\alpha+t_m-s_n}{n-m}.\]
Clearly, the right hand side is $o(n+m)$ while the left hand side is not. Thus only finitely many solutions of \eqref{E:inf_many_ev} exist.

For general interface problems (with $V(x) = \chi_{\{x<0\}}V_-(x)+\chi_{\{x\geq 0\}}V_+(x)$) the question of finiteness of the number of eigenvalues seems open. Due to Theorem 3 in \cite{GT84} there are, for example, potentials $V_-$ and $V_+$ with equal gap lengths and opposite DN/ND `polarities'. \textit{If}, in addition, the locations of the gap centers were identical, there would be an eigenvalue in each gap $G_n^- = G_n^+, \ n\in \N$. However, it seems to be an open problem whether such potentials $V_-$ and $V_+$ exist.

\paragraph{Numerical results}

The point spectrum of the additive interface problem with the potential $V_0(x) = \sin^2(\pi x/10)$ has been computed using a 4th order centered finite difference discretization. The eigenvalues are plotted in Figure~\ref{F:pt_spec} for a range of values of $\alpha$. The shaded regions are the union of spectral bands of $L_0$ and $L_0+\alpha$. The results agree with Theorem \ref{T:main}.
\begin{figure}[!ht]
\begin{center}
\includegraphics[scale=.49]{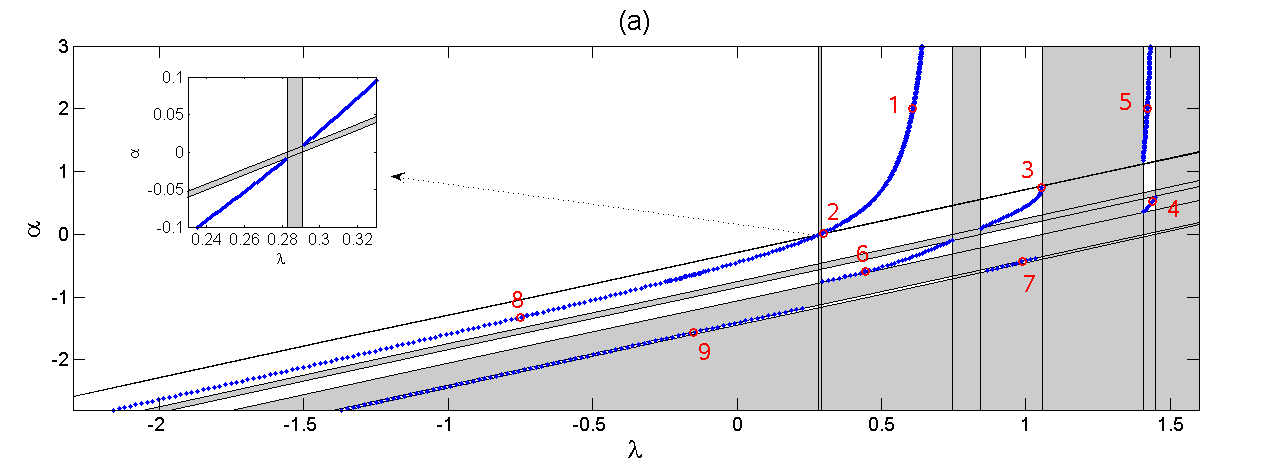}
\end{center}
\caption{Numerically computed point spectrum of $L$ with $V_0(x) = \sin^2(\pi x/10)$ for a range of values of $\alpha$. The union of spectral bands of $L_0$ and $L_0+\alpha$ is shaded. The inset blows up the region near $\lambda =s_1$, $\alpha=0$. Eigenfunctions for the labeled points are plotted in Figure~\ref{F:pt_spec_efns}.}
\label{F:pt_spec}
\end{figure}

In Figure~\ref{F:pt_spec_efns} we plot eigenfunctions corresponding to nine selected eigenvalues in Figure~\ref{F:pt_spec}. Note that the decay rate of the eigenfunctions is often very different on either side of the origin.
\begin{figure}[!ht]
\begin{center}
\includegraphics[scale=.54]{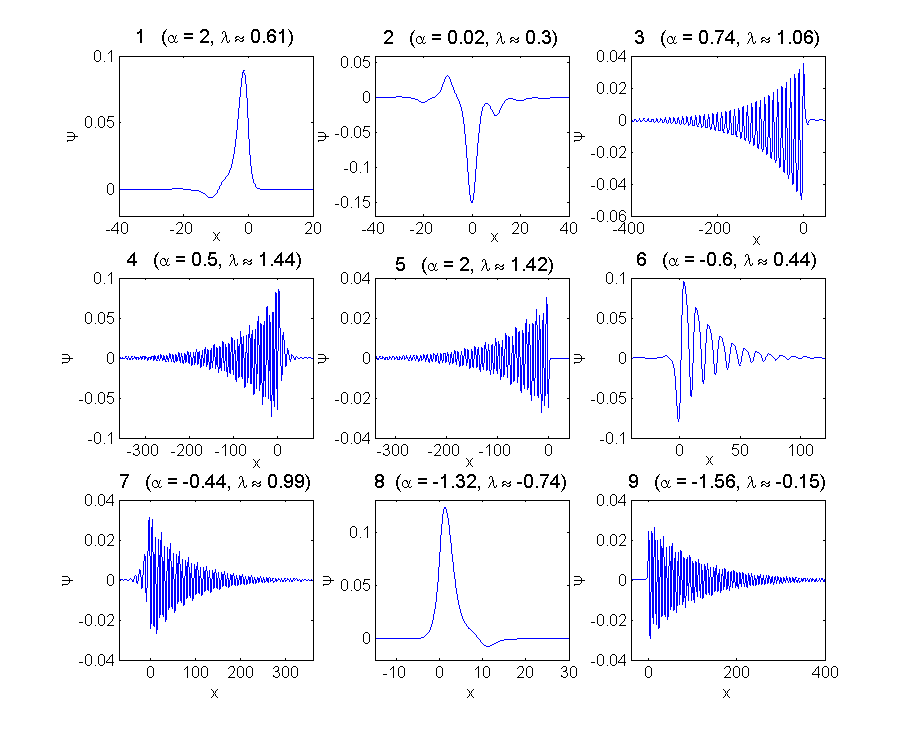}
\end{center}
\caption{Eigenfunctions corresponding to the 9 labeled eigenvalues in Figure~\ref{F:pt_spec}.}
\label{F:pt_spec_efns}
\end{figure}

For the potential $V_0(x)=\cos^2(\pi x /10)$ it is clear from the numerically obtained Figure~\ref{F:R_behavior} (b) that the intersections $G_j\cap (G_k +\alpha), \ j,k\in \{0, \ldots, 3\}$ contain no eigenvalues because the gaps $G_0$ and $G_1$ are DN-gaps and so seem to be $G_2$ and $G_3$. In other words, based on the numerics, the additive interface problem \eqref{E:L_prob}, \eqref{E:add_interf} with $V_0(x)=\cos^2(\pi x /10)$ has \textit{no eigenvalues on} $(-\infty, s_8]$. Note that our analysis guarantees non-existence of eigenvalues in $(-\infty,s_4]$.

\subsection{Point Spectrum for Interface Problems Made of Dislocated Even Potentials}\label{S:disloc_interf}
For the dislocation interface \eqref{E:transl_interf} we restrict our attention to the two representative cases $t=-s$ and $s=0$.

\subsubsection{Symmetric Dislocations}\label{S:disloc_sym}
Here we study the eigenvalue problem \eqref{E:L_prob} with \eqref{E:transl_interf} in the case where $t=-s, \ t\in (0,d)$. This can be done via the system
\beq\label{E:L_system2}
\begin{array}{ll}
L_-^t \psi^t := -\partial_x^2 \psi^t + V_0(x-t) \psi^t  = \lambda \psi^t & \text{for} \ x < 0, \\
L_+^t \psi^t := -\partial_x^2 \psi^t + V_0(x+t) \psi^t = \lambda \psi^t & \text{for} \ x \geq 0 \\
\end{array}
\eeq
coupled by the the $C^1$-matching conditions 
\beq\label{E:match2}
\psi^t(0-)=\psi^t(0+) \qquad \text{and} \qquad  \frac{d}{dx}\psi^t(0-) = \frac{d}{dx}\psi^t(0+).
\eeq
First note that the spectrum $\sigma(L_t)$ of the operator $L_t:= -\partial_x^2+V_0(x+t)$ on $\R$ is identical to the spectrum $\sigma(L_0)$ of $L_0=-\partial_x^2+V_0$ on $\R$ and we have $G_n^+=G_n^-$. Moreover, the Bloch functions $\psi_{1,2}^t$ of $L_t$ for $\lambda\in \R\setminus \sigma(L_t)=\R\setminus \sigma(L_0)$ are just shifts of the Bloch functions of $L_0$, i.e., $\psi_i^t(x;\lambda)=\psi_i^0(x+t;\lambda), i=1,2$. Therefore, an $L^2$-solution of \eqref{E:L_prob} with \eqref{E:transl_interf} can only exist if $\lambda \notin \sigma(L_0)$. For such $\lambda$ any localized eigenfunction $\psi^t$ of \eqref{E:L_prob} with \eqref{E:transl_interf} must take the form
\[\psi^t(x;\lambda)  = \chi_{\{x<0\}}\psi_-^t(x;\lambda) + \chi_{\{x\geq 0\}}\psi^t_+(x;\lambda), \]
where $\psi^t_\pm(x;\lambda)$ are those Bloch functions of $L_{\pm t}$, which decay on $\R^\pm$, respectively.

\medskip

As in Section \ref{S:jump_even_pot} we introduce the ratio functions
$$
R^t_\pm(x;\lambda) = \frac{\frac{\partial}{\partial x}\psi^t_\pm(x;\lambda)}{\psi^t_{\pm}(x;\lambda)},
$$
so that the matching conditions \eqref{E:match2} are equivalent to $R^t_+(0;\lambda)=R^t_-(0;\lambda)$. Due to the fact, that the Bloch functions $\psi^t_\pm$ are just shifts of the Bloch functions $\psi^0_\pm$, we see that $R^t_+(x;\lambda)=R^0_+(x+t;\lambda)$ and $R^t_-(x;\lambda)=R^0_-(x-t;\lambda)$. Thus, the matching condition \eqref{E:match2} amounts to 
$$
R^0_+(t;\lambda) = R^0_-(-t;\lambda).
$$
Finally, the evenness of the potential $V_0$ and the fact that only one linearly
independent Bloch function decaying at $+\infty$ exists, imply that
$\psi_+^0(x;\lambda) = \pm \psi_-^0(-x;\lambda)$ since $\lambda \notin \sigma(L_0)$, and
hence $R_+^0(t;\lambda) = -R_-^0(-t;\lambda)$ so that finding an eigenvalue of 
\eqref{E:L_prob} with \eqref{E:transl_interf} amounts to finding a zero or a pole of $R^0_+(t;\lambda)$ for some $t\in (0,d)$. This is done below via the intermediate value theorem and monotonicity properties of the function $R^0_+(t;\lambda)$. 

\medskip

For simplicity we write in the following $R(t;\lambda)$ instead of $R^0_+(t;\lambda)$. First, we need to generalize Lemma~\ref{L:G_monotone} on the monotonicity and continuity of $R(t;\lambda)$ or the corresponding Pr\"{u}fer angle $\theta(t;\lambda)$ as a function of $t$ and $\lambda$. Suppose $\psi \in L^2(0,\infty)$ solves $L_0\psi=\lambda\psi$. We apply again the Pr\"{u}fer transformation given by 
$$\psi(x;\lambda) = \rho(x;\lambda)\sin(\theta(x;\lambda)), \qquad \psi'(x;\lambda) = \rho(x;\lambda)\cos(\theta(x;\lambda)),$$ 
which transforms the equation $ L_0\psi=\lambda \psi$ into the system
\begin{align}
\theta' & = 1+(\lambda - V_0(x)-1)\sin^2(\theta), \label{pr1}\\
\rho' & = -\rho(\lambda - V_0(x)-1)\sin(\theta)\cos(\theta), \label{pr2}
\end{align}
where the prime denotes differentiation in $x$. Note that \eqref{E:Bloch_form} implies $2d$-periodicity in $t$ of $R(t;\lambda)$. Hence $\theta(t+2d;\lambda)=\theta(t;\lambda)+m\pi$, where $m$ is an integer which is constant in $\lambda$ within each spectral gap. In fact, it can be shown by the Sturm oscillation theorem that $m=2n$ when $\lambda\in G_n$. 

\medskip

In the subsequent arguments we use the following result on differential inequalities, cf. Walter~\cite{Wa}, which we quote in a slightly simplified way. Functions $v,w$ satisfying \eqref{sub_super} below are called sub-, supersolutions, respectively.

\blem 
\label{ivp_comparison}
Let $f:[a,b]\times\R\to \R$ be continuous and continuously differentiable with respect to the second variable. If $v,w\in C^1[a,b]$ satisfy 
\begin{equation}
v'\leq f(t,v), \quad w'\geq f(t,w) \mbox{ on } [a,b] \mbox{ with } v(a)\leq w(a),
\label{sub_super}
\end{equation}
then $v\leq w$ in $[a,b]$. More precisely, either $v<w$ in $(a,b]$ or there exists $c\in (a,b]$ such that $v=w$ on $[a,c]$ and $v<w$ on $(c,b]$. Moreover, if one of the differential inequalities holds strictly almost everywhere in $[a,b]$, then $v(t)<w(t)$ holds for all $t\in (a,b]$.
\elem

\blem[Monotonicity in $\lambda$]\label{L:G_monotone2}
Let $G_n=(s_{2n},s_{2n+1}), n\geq 0$  be a fixed gap. For $(t,\lambda)\in [0,d]\times \overline{G_n}$ the function $R$ is continuous except in the set $S=\bigcup_{t\in [0,d]} S_t$, where for each $t$ either $S_t=\emptyset$ or $S_t=\{(t,\lambda_t)\}$ and 
\begin{equation}
\label{blow_up}
\lim_{\lambda\to \lambda_t-} R(t;\lambda)=+\infty, \quad \lim_{\lambda\to \lambda_t+} R(t;\lambda)=-\infty.
\end{equation}
For a fixed $t$ the function $R(t;\lambda)$ is strictly increasing for $\lambda \in G_n$ if $S_t=\emptyset$, and strictly increasing for $\lambda \in (s_{2n},\lambda_t)$ and for $\lambda\in (\lambda_t,s_{2n+1})$ if $S_t=\{(t,\lambda_t)\}$. Moreover, if $\lambda,\mu \in \overline{G_n}$ and $S_t\neq \emptyset$, then $\lambda<\lambda_t<\mu$ implies $R(t;\lambda)>R(t;\mu)$. Consequently, for all $t\in [0,d]$ we have that $\lambda\neq\mu$ implies $R(t;\lambda)\neq R(t;\mu)$.
\elem

\bpf 
As we have seen in Lemma \ref{L:G_monotone}, the Pr\"ufer-variables $\theta$ and $\rho$ are continuous functions of both $t\in \R$ and $\lambda\in G_n$. Since $R(t;\lambda)=\cot(\theta(t;\lambda))$, the function $R(t;\lambda)$ is continuous except for those values, where $\theta(t;\lambda)$ passes through $k\pi, k \in \Z$. Since $R(t;\lambda)$ is strictly increasing in $\lambda$ at points of continuity by Lemma \ref{L:G_monotone}, the relation \eqref{blow_up} follows. The fact that there is at most one blow-up point $\lambda_{t_0}$ with respect to $\lambda$ will follow from the next statement. Let $\lambda_{t_0}$ be a pole and $\lambda<\lambda_{t_0}<\mu$ and suppose for contradiction that $R(t_0;\lambda)\leq R(t_0;\mu)$. By lowering $\mu$ if necessary and keeping the order $\lambda<\lambda_{t_0}<\mu$, we may achieve $R(t_0;\lambda)=R(t_0;\mu)$, i.e., there exists $k\in \Z$ such that $\theta(t_0;\lambda)=\theta(t_0;\mu)+k\pi$. Note that 
\begin{align*}
\theta'(t;\lambda) &= 1+(\lambda - V_0(t)-1)\sin^2(\theta(t;\lambda)),\\ 
\theta'(t;\mu) &= 1+(\mu - V_0(t)-1)\sin^2(\theta(t;\mu))> 1+(\lambda - V_0(t)-1)\sin^2(\theta(t;\mu)) 
\end{align*} 
for almost all $t\geq t_0$. By the comparison principle of Lemma \ref{ivp_comparison} we obtain $\theta(t;\lambda)<\theta(t;\mu)+k\pi$ for all $t>t_0$. Here we have used that $\theta$ and $\theta+k\pi$ solve the same differential equation. It follows in particular, that 
$$
\theta(t_0;\lambda)+m\pi=\theta(t_0+2d;\lambda)< \theta(t_0+2d;\mu)+k\pi=\theta(t_0;\mu)+(m+k)\pi
$$
contradictory to our assumption $\theta(t_0;\lambda)=\theta(t_0;\mu)+k\pi$. This proves the lemma.
\epf

\bcor 
\label{C:nr_evals}
For $t=-s$ the number of dislocation eigenvalues in any gap $G_n, n \geq 0,$ is $0$, $1$ or $2$. If there are $2$ eigenvalues, then one of them has an even and the other one an odd eigenfunction.
\ecor

\bpf
It follows from Lemma \ref{L:G_monotone2} that for fixed $t$ the function $R(t;\lambda)$ as a function of $\lambda$ can have at most one zero and at most one pole.
\epf

\blem[Monotonicity in $t$]\label{L:G_monotone3}
Suppose $V_0$ is an even, $d-$periodic $C^1$-function. Let $G_n=(s_{2n}, s_{2n+1}), n \geq 0,$ be a fixed gap and let $\lambda\in\partial G_n$. 
\begin{itemize} 
\item[(a)] If $V_0$ is strictly increasing on $[0,\frac{d}{2}]$, then either $\theta(t;\lambda)$ is strictly increasing for $t\in[0,d]$ or there exists $t_0\in (0,\frac{d}{2})$ such that $\theta(t;\lambda)$  is  strictly increasing for $t\in [0,t_0]\cup [d-t_0,d]$ and strictly decreasing for $t\in [t_0,d-t_0]$. 
\item[(b)] If $V_0$ is strictly decreasing on $[0,\frac{d}{2}]$, then either $\theta(t;\lambda)$ is strictly increasing for $t\in[0,d]$ or there exists $t_0\in (0,\frac{d}{2})$ such that $\theta(t;\lambda)$  is  strictly decreasing for $t\in [0,t_0]\cup [d-t_0,d]$ and strictly increasing for $t\in [t_0,d-t_0]$. 
\end{itemize}
Note that in both cases, $\theta(t;\lambda)$ can change monotonicity with respect to $t$ only once on $[0,d/2]$. \elem

\bpf
We give the proof in case (a). The proof for case (b) needs only minor modifications. Recall from Lemma~\ref{d_half} that for $\lambda\in\partial G_n$ the evenness of $V_0$ implies $\theta(\frac{d}{2};\lambda)=k\frac{\pi}{2}$ for some $k \in \Z$. Hence we have
$$ \theta\left(\frac{d}{2}+s;\lambda\right) = k\pi - \theta\left(\frac{d}{2}-s;\lambda\right)  \quad \forall s\in \left[0,\frac{d}{2}\right]
$$
since both sides satisfy the differential equation \eqref{pr1} with $V_0(x)=V_0(d/2+s)=V_0(d/2-s)$, and have the same initial values at $s=0$. In particular
\begin{equation}
\theta'\left(\frac{d}{2}+s;\lambda\right) = \theta'\left(\frac{d}{2}-s;\lambda\right)  \quad \forall s\in \left[0,\frac{d}{2}\right]
\label{symm}
\end{equation}
due to the evenness of $V_0(x)$ about $x=d/2$ (implied by $d-$periodicity and evenness about $x=0$).
In any of the two cases, the monotonicity of $\theta$ in $[0,\frac{d}{2}]$ has its counterpart in $[\frac{d}{2},d]$. Differentiation of \eqref{pr1} with respect to $t$ yields
\[
(\theta')' = 2(\lambda - V_0(t)-1)\sin(\theta)\cos(\theta) \theta'- V_0'(t)\sin^2(\theta) \label{d_theta}.
\]
If $\theta'(t_0)\geq 0$ for some $t_0\in(0,d/2]$, then by \eqref{symm} also $\theta'(d-t_0)\geq 0$, so that Lemma \ref{ivp_comparison} applied to $v:=0$ and $w:=\theta'$ on $[d-t_0,d]$ (note that $V_0'< 0$ a.e. on $[d/2,d]$) implies $\theta'>0$ on $(d-t_0,d]$ and by \eqref{symm} also on $[0,t_0)$. Below we show that such $t_0$ exists. Let $t_0$ be chosen maximal with these properties. If $t_0=d/2$, then $\theta$ is strictly increasing on $[0,d]$. If $t_0<d/2$, then $\theta'(t_0)=0$ and Lemma \ref{ivp_comparison} applied to $v:=\theta'$ and $w:=0$ on $[t_0,d/2]$ (note that $V_0'> 0$ a.e. on $[0,d/2]$) gives $\theta'<0$ on $(t_0,d/2]$ and by \eqref{symm} also on $[d/2,d-t_0)$. Consequently, $\theta'> 0$ on $[0,t_0)\cup (d-t_0,d]$, and $\theta'<0$ on $(t_0,d-t_0)$.

Note that the case $\theta'<0$ throughout $(0,d)$ is impossible since then $\theta(0)\neq \theta(d)$ and thus $\theta(s)$ is a multiple of $\pi$ for some $s\in[0,d]$ (because $\theta(0)$ and $\theta(d)$ are multiples of $\frac{\pi}{2}$ by the remarks before Lemma \ref{d_half}), whence $\theta'>0$ in some neighborhood of $s$ due to the differential equation for $\theta$, contradicting $\theta'<0$ on $(0,d)$. This proves the lemma.
\epf

\bthm\label{T:main_disloc_G0}
Suppose $V_0$ satisfies the basic assumptions, i.e., it is even, $d$-periodic and continuous. Let $s=-t$ in \eqref{E:transl_interf} and consider the semi-infinite gap $G_0=(-\infty, s_1)$.

\begin{itemize}
\item[(a)] If $V_0$ is strictly increasing on $[0,d/2]$, then there is no/exactly one dislocation eigenvalue in $G_0$ for $t\in [0,d/2]$ / $(d/2,d)$ respectively.
\item[(b)] If $V_0$ is strictly decreasing on $[0,d/2]$, then there is exactly one/no dislocation eigenvalue in $G_0$ for $t\in (0,d/2)$ / $[d/2,d]$ respectively.
\end{itemize}
\ethm
\bpf
It suffices to prove part (a), since (b) follows from (a) via shifting the potential by the half-period $\frac{d}{2}$ due to the evenness of $V_0(x)$ about $x=d/2$.
Recall that the first band edge $s_1$ is a Neumann eigenvalue. The first Neumann eigenfunction $u$ is positive, and hence, due to the $d-$periodicity and evenness of $V_0$ it has an extremum at $x=d/2$. It can be thus viewed as the first Neumann eigenfunction on the interval $x\in[0,d/2]$, i.e., the minimizer of the energy
\[\int_0^{d/2} {v'}^2+V_0(x) v^2\,dx, \quad \text{where} \  v \in H^1(0,d/2) \quad \text{with} \quad \int_0^{d/2}v^2 dx = 1.\]
As the decreasing rearrangement $u^*$ of $u$ decreases the energy, $u$ has to be decreasing, i.e., $u'(x)\leq 0$, on $[0,d/2]$. In fact, $u'(x) <0$ on $(0,d/2)$. If $u'(\xi)=0$ for some $\xi\in(0,d/2)$, then due to positivity of $u$ the function $R$ satisfies $R(0;s_1)=R(\xi;s_1)=R(d/2;s_1)=0$, hence $R$, and in turn $\theta$, change monotonicity at least three times on $(0,d/2)$, which is impossible by Lemma \ref{L:G_monotone3}. Therefore $R(t;s_1)<0$ for $t\in(0,d/2)$, $R(t;s_1)>0$ for $t\in(d/2,d)$, and $R(t;s_1)=0$ for $t\in \{0,d/2,d\}$.

Recall now from Lemma \ref{L:R_asympt_infty} that $R(t;\lambda)=R_+^t(0;\lambda) \rightarrow -\infty$ as $\lambda \rightarrow -\infty$ for any $t\in [0,d]$. Moreover, $R(t;\lambda)$ is continuous in $\lambda \in G_0$ because continuity can be broken only by a pole. But because 
$R(t;\lambda) \rightarrow -\infty$ as $\lambda \rightarrow -\infty$ and $R(t;\lambda)$ is increasing in $\lambda$ within each continuity segment, a pole would mean that $R(t;\lambda)$ takes the same value for some $\lambda_1\neq \lambda_2\in G_0$, which is impossible by Lemma \ref{L:G_monotone2}.

As a result $R(t;\lambda)$ stays negative for $t\in (0,d/2)$ throughout $\lambda\in G_0$, goes through $0$ once for $t\in (d/2,d)$, and takes the zero value at $\lambda =s_1 \notin G_0$ for $t=d/2$.
\epf

\begin{theorem}
\label{T:main_disloc_G1}
Suppose $V_0$ is an even, $d$-periodic $C^1$-function, let $s=-t$ in \eqref{E:transl_interf} and consider the first finite gap $G_1=(s_2, s_3)$.
\begin{itemize}
\item[(a)] Suppose $V_0$ is strictly increasing on $[0,\frac{d}{2}]$. Then $G_1=(\nu_2,\mu_1)$, and the second Neumann-eigenfunction is strictly monotone on $[0,d]$. For the first Dirichlet-eigenfunction $u$ we have the alternative: 
\begin{itemize}
\item[(a1)] $u$ is strictly monotone on $[0,\frac{d}{2}]$. Then there is exactly one dislocation-eigenvalue in $G_1$ for $t\in (0,d)\setminus\{\frac{d}{2}\}$ and none for $t=\frac{d}{2}$.
\item[(a2)] $u$ changes monotonicity on $[0,\frac{d}{2}]$ exactly once at the extremal point $d_0\in (0,\frac{d}{2})$. Then the number of dislocation-eigenvalues in $G_1$ is as follows: 
\begin{table}[!ht]
\begin{center}
\begin{tabular}{|l|c|c|c|c|c|}
\hline
dislocation parameter & $t\in (0,d_0)$ & $t\in [d_0,\frac{d}{2}]$, & $t\in (\frac{d}{2},d-d_0)$ 
& $t\in [d-d_0,d)$ & $t=d$\\
\hline
number of eigenvalues & 1 & 0 & 2 & 1 & 0\\
\hline
\end{tabular}
\end{center}
\end{table}
\end{itemize}
\item[(b)] Suppose $V_0$ is strictly decreasing on $[0,\frac{d}{2}]$. Then $G_1=(\mu_1,\nu_2)$, and the first Dirichlet-eigen\-function is strictly monotone on $[0,\frac{d}{2}]$. For the second Neumann-eigenfunction $u$ we have the alternative: 
\begin{itemize}
\item[(b1)] $u$ is strictly monotone on $[0,\frac{d}{2}]$. Then there is exactly one dislocation-eigenvalue in $G_1$ for $t\in (0,d)\setminus\{\frac{d}{2}\}$ and none for $t=\frac{d}{2}$.
\item[(b2)] $u$ changes monotonicity on $[0,\frac{d}{2}]$ exactly once at the extremal point $d_0\in (0,\frac{d}{2})$. Then the number of dislocation-eigenvalues in $G_1$ is as follows: 
\begin{table}[!ht]
\begin{center}
\begin{tabular}{|l|c|c|c|c|c|}
\hline
dislocation parameter & $t\in (0,d_0)$ & $t\in[d_0,\frac{d}{2})$, & $t=\frac{d}{2}$ & $t\in (\frac{d}{2},d-d_0)$ & $t\in [d-d_0,d]$\\
\hline
number of eigenvalues & 2 & 1 & 0 & 1 & 0\\
\hline
\end{tabular}
\end{center}
\end{table}
\end{itemize}
\end{itemize} 
\end{theorem}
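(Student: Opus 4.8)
The plan is to reduce, for each fixed dislocation parameter $t\in(0,d)$, the counting of dislocation eigenvalues in $G_1$ to a lattice-point count for the Pr\"ufer angle. As recalled in Section~\ref{S:disloc_sym}, for $\lambda\in G_1$ an eigenvalue of $L$ corresponds to a zero (then the eigenfunction is even) or to a pole (then it is odd) of $R(t;\lambda)=R_+^0(t;\lambda)=\cot\theta(t;\lambda)$, i.e. to $\theta(t;\lambda)\in\frac{\pi}{2}\Z$. By Lemma~\ref{L:G_monotone2} the map $\lambda\mapsto\theta(t;\lambda)$ is continuous and strictly decreasing on the closed gap $\overline{G_1}$, and its boundary values are provided by the compactness argument of Lemma~\ref{L:R_lim_vals}, now run at a general $t$ rather than at $t=0$: $\theta(t;s_2)$ and $\theta(t;s_3)$ are the Pr\"ufer angles, evaluated at $x=t$, of the two band-edge eigenfunctions. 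Consequently the number of dislocation eigenvalues in $G_1$ at parameter $t$ equals the number of points of $\frac{\pi}{2}\Z$ lying \emph{strictly} between $\theta(t;s_3)$ and $\theta(t;s_2)$, and the whole problem becomes: locate the two functions $t\mapsto\theta(t;s_2)$ and $t\mapsto\theta(t;s_3)$ on $[0,d]$.

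I would carry out case~(a) in detail. Lemma~\ref{L:ordering}(a) gives $G_1=(\nu_2,\mu_1)$, so $s_2=\nu_2$ is a Neumann edge with eigenfunction $\eta_2$ and $s_3=\mu_1$ a Dirichlet edge with eigenfunction $\zeta_1$. Normalise so that $\theta(0;s_2)=\frac{\pi}{2}$ and $\theta(0;s_3)=0$ (taking $\zeta_1'(0)>0$). Since by \eqref{pr1} the Pr\"ufer angle can cross a multiple of $\pi$ only upwards, the unique interior zero of $\eta_2$ at $x=\frac{d}{2}$ (Lemma~\ref{L:ordering}(a)) forces $\theta(\frac{d}{2};s_2)=\pi$ and then $\theta(d;s_2)=\frac{3\pi}{2}$, while $\zeta_1>0$ on $(0,d)$ with $\zeta_1'(\frac{d}{2})=0=\zeta_1(d)$ forces $\theta(\frac{d}{2};s_3)=\frac{\pi}{2}$ and $\theta(d;s_3)=\pi$. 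Next I would invoke Lemma~\ref{L:G_monotone3}(a) at both edges. Applied at $\lambda=\mu_1$ it produces exactly the theorem's alternative: either $\theta(\cdot;s_3)$ is strictly increasing on $[0,d]$, equivalently $\zeta_1$ is strictly monotone on $[0,\frac{d}{2}]$ (case (a1)); or $\theta(\cdot;s_3)$ has one downward bump with turning point $t_0$ and, crossing $\frac{\pi}{2}$ upwards at some $d_0\in(0,t_0)$, gives $\zeta_1'>0$ on $(0,d_0)$ and $\zeta_1'<0$ on $(d_0,\frac{d}{2})$, with the mirrored behaviour on $[\frac{d}{2},d]$ furnished by the reflection identity \eqref{symm}, so that $d_0$ is the unique extremum of $\zeta_1$ in $(0,\frac{d}{2})$ (case (a2)). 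Applied at $\lambda=\nu_2$ the bump alternative is impossible: a bump would make $\theta(\cdot;s_2)$ pass through $\pi$ at a point of $(0,\frac{d}{2})$, hence $\eta_2$ vanish there, contradicting that $\eta_2$ is strictly monotone on $[0,d]$ (Lemma~\ref{L:ordering}(a)); therefore $\theta(\cdot;s_2)$ is strictly increasing on $[0,d]$, the direction being fixed by a one-line computation with \eqref{pr1} and $V_0(0)=\min V_0<\nu_1\le\nu_2$.

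With $\theta(\cdot;s_2)$ strictly increasing from $\frac{\pi}{2}$ to $\pi$ (at $\frac{d}{2}$) to $\frac{3\pi}{2}$, and the shape of $\theta(\cdot;s_3)$ from (a1)/(a2) in hand, I would read off, interval by interval in $t$, how many points of $\frac{\pi}{2}\Z$ lie strictly inside $(\theta(t;s_3),\theta(t;s_2))$. In case (a1) exactly one such point lies inside ($\frac{\pi}{2}$ for $t<\frac{d}{2}$, $\pi$ for $t>\frac{d}{2}$) for all $t\in(0,d)\setminus\{\frac{d}{2}\}$, and none for $t=\frac{d}{2}$; in case (a2) the bookkeeping reproduces the table — for instance on $(\frac{d}{2},d-d_0)$ both $\frac{\pi}{2}$ and $\pi$ are strictly inside, giving two eigenvalues, which are then automatically one even and one odd via the correspondence $R=0\leftrightarrow$ even, $R=\infty\leftrightarrow$ odd (consistent with Corollary~\ref{C:nr_evals}). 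The borderline values $t\in\{d_0,\frac{d}{2},d-d_0,d\}$ are precisely those at which a point of $\frac{\pi}{2}\Z$ coincides with an endpoint of the interval, i.e. the eigenvalue has reached a band edge and left the open gap $G_1$. Case (b) I would obtain from case (a) by the half-period shift $x\mapsto x+\frac{d}{2}$, which by evenness of $V_0$ about $\frac{d}{2}$ turns the strictly decreasing case into the strictly increasing one and replaces the dislocation parameter $t$ by $t+\frac{d}{2}$ modulo $d$; translating the case-(a) conclusion yields the case-(b) tables. (Alternatively one repeats the above argument verbatim using Lemmas~\ref{L:ordering}(b) and \ref{L:G_monotone3}(b).)

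The main obstacle is the second paragraph, i.e. pinning down the two angle functions: establishing the (a1)/(a2) dichotomy for $\zeta_1$ and, above all, ruling out a bump for the Neumann band-edge angle $\theta(\cdot;s_2)$ so that it is \emph{globally} monotone. Without global monotonicity, $\theta(t;s_2)$ could re-enter and re-cross values in $\frac{\pi}{2}\Z$ and the clean interval count would collapse; what saves it is that such a monotonicity failure would manufacture an additional zero of $\eta_2$, contradicting the Sturm oscillation count (equivalently, the monotonicity of $\eta_2$ from Lemma~\ref{L:ordering}(a)). Once both angle functions are located, the counting of the last paragraph is routine.
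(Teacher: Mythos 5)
Your proposal is correct and, despite the repackaging of the final bookkeeping, follows essentially the same route as the paper: the same reduction of symmetric-dislocation eigenvalues to zeros/poles of $R(t;\lambda)$, the same use of Lemma~\ref{L:ordering}, of the $\lambda$-monotonicity in Lemma~\ref{L:G_monotone2}, of the bump alternative of Lemma~\ref{L:G_monotone3} (giving the (a1)/(a2) dichotomy at $\mu_1$ and being excluded at $\nu_2$ -- the paper excludes it via $\theta'(\frac{d}{2};\nu_2)=1>0$, you via an extra zero of $\eta_2$, both fine), the band-edge limits of Lemma~\ref{L:R_lim_vals} rerun at general $t$, and the same half-period shift to deduce (b) from (a). Your count of points of $\frac{\pi}{2}\Z$ strictly between $\theta(t;s_3)$ and $\theta(t;s_2)$ is equivalent to the paper's endpoint-sign analysis of $R(t;\nu_2)$ and $R(t;\mu_1)$ combined with Lemma~\ref{L:G_monotone2}; the one detail you should make explicit is that the limits as $\lambda$ approaches the gap edges determine $\theta(t;\lambda)$ exactly and not merely mod $\pi$ (this follows from $\psi_+(0;\lambda)\neq 0$ throughout the gap together with the monotonicity of $\theta(0;\cdot)$ and continuous dependence in $x$), since the compactness argument by itself only fixes $R$, and the lattice count would be ambiguous under a shift of a band-edge angle by $\pi$.
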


\bpf
As in Theorem \ref{T:main_disloc_G0} it suffices to prove part (a) when, in addition, the roles of $\nu_2$ and $\mu_1$ are switched in the proof of (b). The strict monotonicity of the second Neumann eigenfunction and the fact that $G_1=(\nu_2,\mu_1)$ was already stated in Lemma~\ref{L:ordering}. For the monotonicity alternative of the first Dirichlet eigenfunction $u$ (which can be assumed positive on $(0,d)$) recall that 
$$u(x)=\rho(x;\mu_1)\sin\theta(x;\mu_1), \quad u'(x)=\rho(x;\mu_1)\cos\theta(x;\mu_1).
$$ 
We can assume that $\theta(0;\mu_1)=0$, $\theta(\frac{d}{2};\mu_1)=\frac{\pi}{2}$ and that $\theta(x;\mu_1)$ ranges in $[0,\pi)$ for $x\in [0,\frac{d}{2}]$. According to the monotonicity alternative for $\theta$ in Lemma~\ref{L:G_monotone3} there are two possibilities: either $\theta(x;\mu_1)$ is increasing and hence $\cos(\theta(x;\mu_1))>0$ for $x\in (0,\frac{d}{2})$, or $\theta(x;\mu_1)$ is strictly increasing for $t\in [0,t_0]$ and strictly decreasing for $t\in[t_0,\frac{d}{2}]$. In the latter case $\theta(x;\mu_1)$ crosses the value $\frac{\pi}{2}$ at some $d_0 \in (0,t_0)$ and hence $u'>0$ on $[0,d_0)$ and $u'<0$ on $(d_0,\frac{d}{2})$. This proves the monotonicity alternative (a1), (a2), and it remains to discuss the number of dislocation eigenvalues.

\smallskip

We may suppose that the second Neumann eigenfunction is strictly decreasing on $[0,d]$ with its unique zero at $\frac{d}{2}$. Thus $\theta(t;\nu_2)$ ranges within $[\frac{\pi}{2},\frac{3\pi}{2}]$ with $\theta(\frac{d}{2};\nu_2)=\pi$, $\theta'(\frac{d}{2};\nu_2)=1$. Therefore, $\theta$ increases near $d/2$ and taking into account Lemma~\ref{L:G_monotone3}(a), we find that $\theta$ must be strictly increasing on $[0,d]$ and hence $R(t;\nu_2)$ is strictly decreasing in $t$ on $[0,d/2)$ and on $(d/2,d]$ with 
\begin{equation}
\label{R_prop}
R(0+;\nu_2)=0-, \quad R\left(\frac{d}{2}-;\nu_2\right)=-\infty, \quad R\left(\frac{d}{2}+;\nu_2\right)=+\infty, \quad R(d-;\nu_2)=0+.
\end{equation}
\noindent
{\em Case (a1):} We may suppose that the first Dirichlet eigenfunction is strictly increasing and positive on $[0,\frac{d}{2}]$ and even around $\frac{d}{2}$. In this case $\theta(t;\mu_1)$ ranges through $[0,\frac{\pi}{2}]$ for $t\in [0,\frac{d}{2}]$ with $\theta(d/2,\mu_1)=\frac{\pi}{2}$ and through $[\frac{\pi}{2},\pi]$ for $t\in [\frac{d}{2},d]$. Hence $\theta'(d/2;\mu_1)\geq0$, and again Lemma~\ref{L:G_monotone3}(a) implies that $\theta$ is strictly increasing on $[0,d]$ and hence $R(t;\mu_1)$ is strictly decreasing in $t$ with 
$$
R(0+;\mu_1)=+\infty, \quad R\left(\frac{d}{2}-;\mu_1\right)=0+, \quad R\left(\frac{d}{2}+;\mu_1\right)=0-, \quad R(d-;\mu_1)=-\infty.
$$
For $t\in (0,\frac{d}{2})$ we have $R(t;\nu_2)<0<R(t;\mu_1)$ and hence by Lemma~\ref{L:G_monotone2} there is no pole $\lambda_t$ (i.e. $S_t=\emptyset$) and there exists a value $\lambda\in (\nu_2,\mu_1)$ with $R(t;\lambda)=0$, and this is the only zero. Thus we have the uniqueness of the dislocation eigenvalue. For $t=\frac{d}{2}$, the zero appears at $\lambda=\mu_1$ which is not inside the gap, i.e., there is no dislocation eigenvalue. Finally, for $t\in (\frac{d}{2},d)$ we have $R(t;\nu_2)>0>R(t;\mu_1)$ and hence by Lemma~\ref{L:G_monotone2} there exists a value $\lambda_t\in (\nu_2,\mu_1)$, where $R(t;\lambda)$ has a pole. No further poles or zeros can exists, which shows again uniqueness of the dislocation eigenvalue.

\smallskip

\noindent
{\em Case (a2):} We may suppose that the positive Dirichlet eigenfunction is strictly increasing on $[0,d_0]$, strictly decreasing on $[d_0,\frac{d}{2}]$ and even around $\frac{d}{2}$. In this case
$\theta(t;\mu_1)$ has the following properties:
$$
\begin{array}{llll}
\mbox{increasing from $0$ to $\frac{\pi}{2}$} & \mbox{for $t\in [0,d_0]$}, & \mbox{increasing from $\frac{\pi}{2}$ to $\theta^\ast$} & \mbox{for $t\in [d_0,t_0]$}, \\ 
\mbox{decreasing from $\theta^\ast$ to $\frac{\pi}{2}$} & \mbox{for $t\in [t_0,\frac{d}{2}]$}, & \mbox{decreasing from $\frac{\pi}{2}$ to $\theta_\ast$} & \mbox{for $t\in[\frac{d}{2},d-t_0]$},\\
\mbox{increasing from $\theta_\ast$ to $\frac{\pi}{2}$} & \mbox{for $t\in [d-t_0,d-d_0]$}, & \mbox{increasing from $\frac{\pi}{2}$ to $\pi$} & \mbox{for $t\in [d-d_0,d]$}
\end{array}
$$
for some $t_0\in(d_0,d/2)$, which translates into the following behavior of $R(t;\mu_1)$:
$$
\begin{array}{llll}
\mbox{decreasing from $+\infty$ to $0$} & \mbox{for $t\in [0,d_0]$}, & \mbox{decreasing from $0$ to $R_\ast$} & \mbox{for $t\in [d_0,t_0]$}, \\ 
\mbox{increasing from $R_\ast$ to $0$} & \mbox{for $t\in [t_0,\frac{d}{2}]$}, & \mbox{increasing from $0$ to $R^\ast$} & \mbox{for $t\in[\frac{d}{2},d-t_0]$},\\
\mbox{decreasing from $R^\ast$ to $0$} & \mbox{for $t\in [d-t_0,d-d_0]$}, & \mbox{decreasing from $0$ to $-\infty$} & \mbox{for $t\in [d-d_0,d]$}.
\end{array}
$$
If we combine this information with \eqref{R_prop}, we conclude:

(i) For $t\in (0,d_0)$ we have $R(t;\nu_2)<0<R(t;\mu_1)$ and hence by Lemma~\ref{L:G_monotone2} there exists a value $\lambda\in (\nu_2,\mu_1)$ with $R(t;\lambda)=0$. No other zero or pole can occur, which shows the uniqueness of the dislocation eigenvalue. 

(ii) For $t=d_0$ the zero has moved to the right-end of the gap, i.e., $R(d_0;\mu_1)=0$.

(iii) Next, we claim that 
\begin{equation}
R(t;\nu_2)<R(t;\mu_1)<0  \mbox{ for all } t\in (d_0,d/2)
\label{rel_R}
\end{equation}
This is obvious for $t$ near $d_0$ and has to hold by continuity for all $t\in (d_0,\frac{d}{2})$ since equality is excluded by Lemma~\ref{L:G_monotone2}. Moreover, \eqref{rel_R} also implies that there cannot be a pole of $R(t;\lambda)$ for $\lambda \in (\nu_2,\mu_1)$ by Lemma~\ref{L:G_monotone2}. Thus, $R(t;\lambda)$ increases continuously from $R(t;\nu_2)$ to $R(t;\mu_1)$ as $\lambda$ runs through $(\nu_2,\mu_1)$ with no zero or pole, i.e., there is no dislocation eigenvalue for $t\in [d_0,\frac{d}{2})$. 

(iv) For $t=\frac{d}{2}$ dislocation eigenvalues are excluded since $R(d/2;\nu_2+)=-\infty, R(d/2;\mu_1)=0$ and $R(d/2;\lambda)$ is strictly increasing for $\lambda \in [\nu_2,\mu_1]$. In fact, $t=\frac{d}{2}$ leads to a perfectly periodic $V(x)$ due to the symmetry of $V_0(x)$ about $x=\frac{d}{2}$. As a result, $t=\frac{d}{2}$ is no dislocation.

(v) Next we consider $t\in (\frac{d}{2},d-d_0)$. For such $t$ we claim that $R(t;\nu_2)>R(t;\mu_1)>0$. Whereas positivity is obvious, the ordering is clear for $t$ near $\frac{d}{2}$, cf. \eqref{R_prop}, and has to hold by continuity for all $t\in (\frac{d}{2},d-d_0)$ since equality is excluded by Lemma~\ref{L:G_monotone2}. Hence, there exists a pole $\lambda_{t} \in (\nu_2,\mu_1)$ of $R(t;\lambda)$ and also a zero $\lambda_0\in (\lambda_t,\mu_1)$, which yields exactly two dislocation eigenvalues for $t\in (\frac{d}{2},d-d_0)$. 

(vi) For $t=d-d_0$, the previous argument still shows the existence of a pole, but the zero has moved to the right end of the interval $(\nu_2,\mu_1)$ leaving us with only one dislocation eigenvalue. 

(vii) Next, consider $t\in (d-d_0,d)$. For such $t$ we have $R(t;\nu_2)>0>R(t;\mu_1)$ which forces the existence of a pole at some value $\lambda_t\in (\nu_2,\mu_1)$ with no further poles or zeros, i.e., there is exactly one dislocation eigenvalue. 

(viii) Finally, $t=d$ is the same as $t=0$ and corresponds to no dislocation and hence there are no eigenvalues. This completes the verification of the number of dislocation eigenvalues.
\epf

Finally, we give a partial answer to the question which of the cases (a1), (a2) or (b1), (b2) for a given potential $V_0$ actually occur. The condition given in the next theorem is a sufficient condition on the potential $V_0$ for (a2), (b2) to occur.

\bthm\label{T:Vbar_cond}
Suppose $V_0$ satisfies the basic assumptions, i.e., it is even, $d$-periodic, and continuous.
\begin{itemize}
\item[(i)] Assume that $V_0$ is strictly increasing on $[0,d/2]$  and
$$
V_0(x)\leq \overline{V}(x):= \beta+(\alpha-\beta)\left(\frac{2x}{d}-1\right)^2 \mbox{ for all } x\in [0,d/2],
$$
where $\beta:=V_0(\frac{d}{2})$ and $\alpha\in \R$ is arbitrary. If 
\begin{equation}\label{T:cond_2}
(\beta-\alpha)d^2 > 80(13-2\sqrt{37})\approx 66.75,
\end{equation}
then only the case (a2) of Theorem \ref{T:main_disloc_G1} occurs, i.e., the first Dirichlet-eigenfunction on $[0,d]$ is even around $\frac{d}{2}$ but changes its monotonicity at some $d_0\in (0,\frac{d}{2})$.

\item[(ii)] Assume that $V_0$ is strictly decreasing on $[0,d/2]$  and
$$
V_0(x)\leq \overline{V}(x):= \beta+(\alpha-\beta)\frac{4}{d^2}x^2 \mbox{ for all } x\in [0,d/2],
$$
where $\beta:=V_0(0)$ and $\alpha\in\R$ is arbitrary. If 
\begin{equation}\label{T:cond_3}
(\beta-\alpha)d^2 > 80(13-2\sqrt{37})\approx 66.75,
\end{equation}
then only the case (b2) of Theorem \ref{T:main_disloc_G1} occurs, i.e., the second Neumann-eigenfunction on $[0,d]$ is odd around $\frac{d}{2}$ but changes its monotonicity at some $d_0\in (0,\frac{d}{2})$.
\end{itemize}
\ethm

\noindent
{\em Remark.} It will become clear from the proof that \eqref{T:cond_2} and \eqref{T:cond_3} are not the only conditions that lead to the conclusion of the theorem. In fact, by choosing different upper bounds $\overline{V}$ and a different candidate function $w(x)$ in the proof below, one may obtain sufficient conditions which are different from \eqref{T:cond_2} and \eqref{T:cond_3}. Since there are manifold ways to derive such conditions, we decided to give only the simplest one. Nevertheless, \eqref{T:cond_2} and \eqref{T:cond_3} are already sufficient to cover example potentials such as $V_0(x)=\sin^2(\pi x/10)$ and $V_0(x)=\cos^2(\pi x/10)$, respectively.

\bpf
Suppose $V_0$ is increasing on $[0,\frac{d}{2}]$. Let $\mu_1$ be the first Dirichlet eigenvalue on $[0,d]$ with corresponding positive eigenfunction $u$. Then $\mu_1=\kappa_{DN}$, where $\kappa_{DN}$ denotes the first eigenvalue on $[0,\frac{d}{2}]$ with Dirichlet boundary condition at $0$ and Neumann boundary condition at $\frac{d}{2}$. Let $\theta$ be the Pr\"ufer-angle for $u$ normalized by $\theta(0)=0$, which implies $\theta(\frac{d}{2})=\frac{\pi}{2}$. The non-monotonicity of $u$ can be shown by proving that $\theta'(\frac{d}{2})<0$, i.e., $\theta>\frac{\pi}{2}$ in a left-neighborhood of $\frac{d}{2}$. By the differential equation for $\theta$ we obtain 
$$
\theta'(\tfrac{d}{2})=1+\left(\mu_1-V_0\left(\tfrac{d}{2}\right)-1\right)\sin^2 \theta\left(\tfrac{d}{2}\right)=\mu_1-V_0\left(\tfrac{d}{2}\right).
$$
Using the variational characterization of $\mu_1=\kappa_{DN}$, it suffices to find one function $w\in H^1(0,\frac{d}{2})$ with $w(0)=0$ such that 
\begin{equation}
\label{cond_ev}
\frac{\int_0^\frac{d}{2} {w'}^2+V_0(x) w^2\,dx}{\int_0^\frac{d}{2} w^2\,dx}< V_0\left(\tfrac{d}{2}\right)=\beta.
\end{equation}
Using the upper bound $V_0(x)\leq \overline{V}(x)$ and the quadratic candidate function $w(x)=x(2c-x)$ with $c\in\R$ to be determined, condition \eqref{cond_ev} amounts to
\begin{multline}
\int_0^\frac{d}{2} {w'}^2+\overline{V}(x) w^2\,dx-\beta\int_0^\frac{d}{2} w^2\,dx \\= 
\frac{d}{3360}\bigl(56(120-\gamma)c^2-14(240-\gamma)c+(560-\gamma)d^2\bigr)<0, \quad \mbox{ where } \gamma := (\beta-\alpha)d^2.
\label{cond_c}
\end{multline}
If $\gamma\geq 120$, then \eqref{cond_c} can always be achieved by an appropriate choice of $c$. If $\gamma<120$, then the optimal choice for $c$ is $c=\frac{d(\gamma-240)}{8(\gamma-120)}$ and hence \eqref{cond_c} amounts to 
$$
\frac{d^3(\gamma^2-2080\gamma+134400)}{8\cdot3360(120-\gamma)}<0,
$$
which is fulfilled for $\gamma\in\bigl(80(13-2\sqrt{37}),120\bigr)\approx(66.75,120)$. Altogether, the statement (i) of the theorem holds true for $\gamma=(\beta-\alpha)d^2>80(13-2\sqrt{37})$. This concludes the proof of statement (i). Part (ii) can be obtained from part (i) by reflecting the interval $[0,\frac{d}{2}]$. 
\epf

\paragraph{Numerical Results}\label{S:numerics_sym_disloc}

We present results of numerical computations of the point spectrum of $L$ with the dislocation interface \eqref{E:transl_interf} with $s=-t$ and $V_0=\sin^2(\pi x/10)$ as well as $V_0=\cos^2(\pi x/10)$. 

As one can see in Figure \ref{F:pt_spec_disloc} bottom, the number of eigenvalues in the semi-infinite gap agrees with Theorem \ref{T:main_disloc_G0}. Regarding eigenvalues in the first finite gap $G_1=(s_2,s_3)$, note first that since $V_0=\sin^2(\pi x/10)$ satisfies the conditions of Theorem \ref{T:Vbar_cond} (with $\beta=1$ and, for instance, $\alpha=0.3$), we know that the first Dirichlet eigenfunction changes monotonicity at some $d_0\in (0,d/2)=(0,5)$. The case $(a2)$ of Theorem \ref{T:main_disloc_G1}, therefore, applies. We obtain numerically $d_0\approx 2.16$, see Fig. \ref{F:pt_spec_disloc} top. The number of eigenvalues in the gap $G_1$ agrees with the theory at each $t\in (0,d)$, see Figure~\ref{F:pt_spec_disloc} bottom. Eigenvalues in the gaps $G_2$ and $G_3$ are also plotted; note that for these our analysis provides no explanation other than the statement of Corollary \ref{C:nr_evals}.

Figure~\ref{F:pt_spec_disloc_efns} shows the eigenfunctions corresponding to the 9 labeled eigenvalues in Figure~\ref{F:pt_spec_disloc}.

\begin{figure}[!ht]
\begin{center}
\includegraphics[scale=.47]{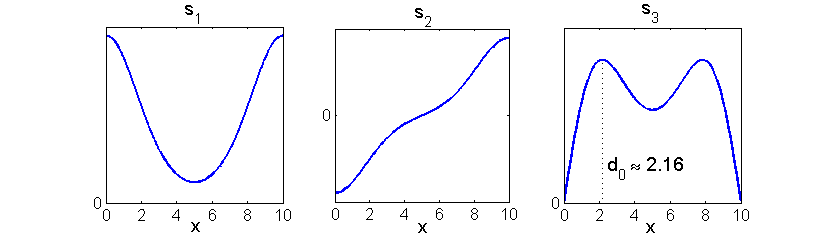}\\
\includegraphics[scale=.44]{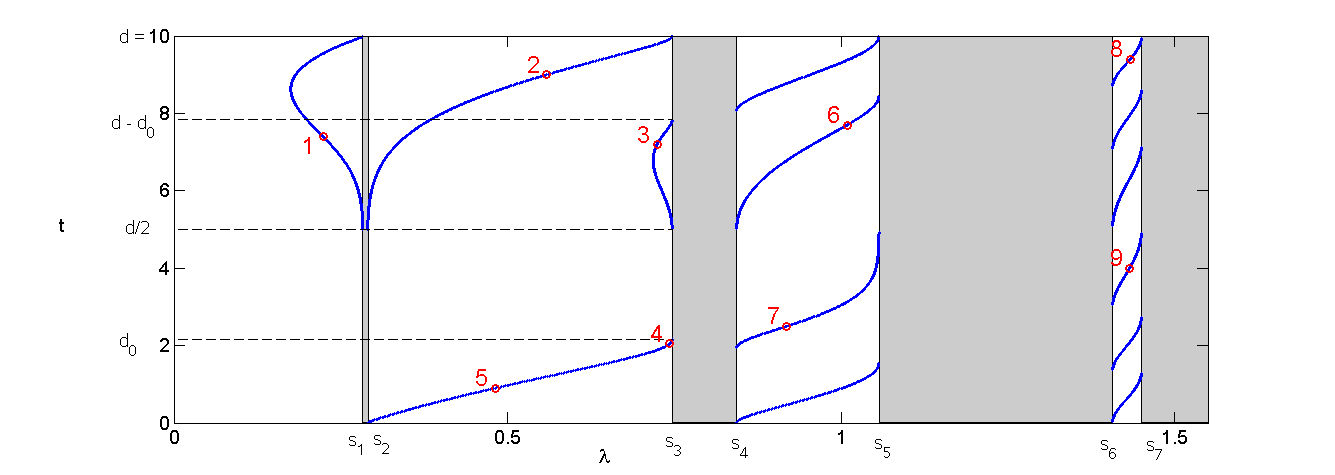}
\end{center}
\caption{top: the first three band edge Bloch functions of $L_0$ with $V_0(x) = \sin^2(\pi x/10)$;  bottom: point spectrum of $L$ for \eqref{E:transl_interf} with $s=-t$ and $V_0=\sin^2(\pi x/10)$ for $t\in [0,d)$. The spectral bands of $L$ are shaded. Eigenfunctions for the labeled points are plotted in Figure~\ref{F:pt_spec_disloc_efns}.}
\label{F:pt_spec_disloc}
\end{figure}

\begin{figure}[!ht]
\begin{center}
\includegraphics[scale=.48]{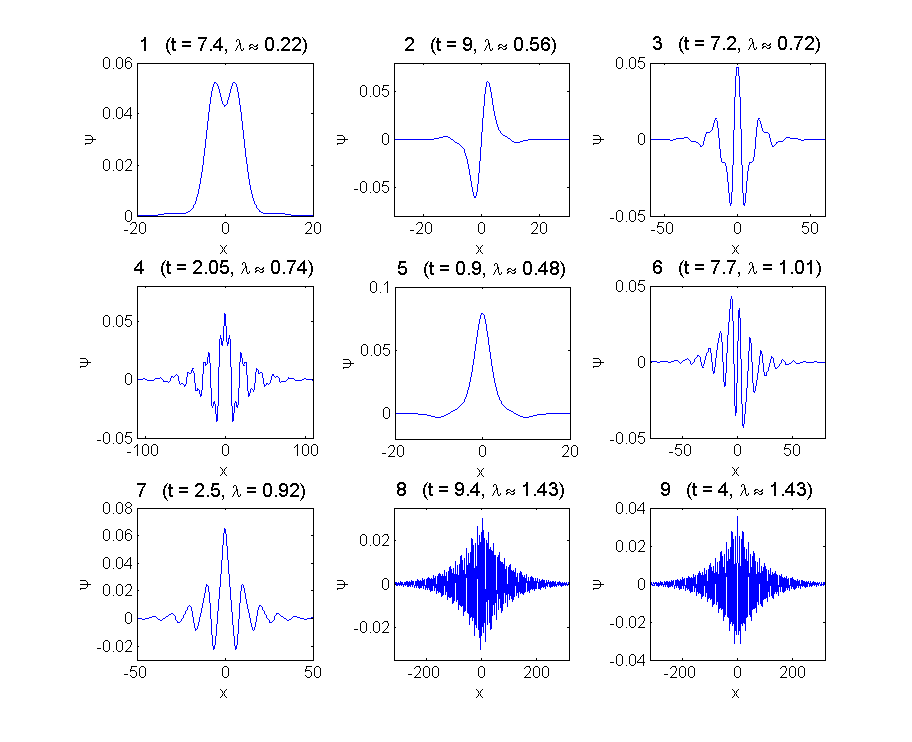}
\end{center}
\caption{Eigenfunctions corresponding to the 9 labeled eigenvalues in Figure~\ref{F:pt_spec_disloc}.}
\label{F:pt_spec_disloc_efns}
\end{figure}

The results for $V_0(x) = \cos^2(\pi x/10)$, as an example of a potential that falls in the case (b) of Theorem \ref{T:main_disloc_G1}, are, in fact, contained in the lower part of Figure~\ref{F:pt_spec_disloc} because $\cos^2(\pi (x-t)/10) =  \sin^2(\pi (x-(t+5))/10)$. As $\cos^2(\pi x/10)$ satisfies the conditions of Theorem \ref{T:cond_2} (with $\beta=1$ and, for instance, $\alpha=0.3$), we know that the alternative (b2) has to apply.

\subsubsection{One-sided Dislocations}\label{S:disloc_one_side}
 As the second representative example of the dislocation problem \eqref{E:L_prob} with \eqref{E:transl_interf} we choose $s=0, t\in (0,d)$, which is equivalent to the system
\beq\label{E:L_system3}
\begin{array}{ll}
L_-^0 \psi^t := -\partial_x^2 \psi^t + V_0(x) \psi^t  = \lambda \psi^t & \text{for} \ x < 0, \\
L_+^t \psi^t := -\partial_x^2 \psi^t + V_0(x+t) \psi^t = \lambda \psi^t & \text{for} \ x \geq 0 \\
\end{array}
\eeq
coupled by the the $C^1$-matching conditions \eqref{E:match2}.
Localized eigenfunctions $\psi^t$, once again, exist only for $\lambda \notin \sigma(L_0)$ and have the form
\[\psi^t(x;\lambda)  = \chi_{\{x<0\}}\psi_-^0(x;\lambda) + \chi_{\{x\geq 0\}}\psi^t_+(x;\lambda), \]
where $\psi^0_-(x;\lambda)$ and $\psi^t_+(x;\lambda)$ are those Bloch functions of $L_0$ and $L_t=-\partial_x^2+V_0(x+t)$, which decay on $\R^-$ and $\R^+$, respectively. The matching condition  \eqref{E:match2} now becomes
$$
R^0_+(t;\lambda) = R^0_-(0;\lambda),
$$
where $R^0_-(0;\lambda)$ is the same as $R_-(\lambda)$ defined in \eqref{E:R_cond}.

Because $R^0_-(0;\lambda)$ is decreasing and continuous in each gap (Lemma \ref{L:G_monotone}) and given the analysis of $R^0_+(t;\lambda)$ in Section \ref{S:disloc_sym}, determining intersections of $R^0_+(t;\lambda)$ and $R^0_-(0;\lambda)$ in $G_0$ and $G_1$ is now straightforward.

\blem
For $s=0$ the number of dislocation eigenvalues in any gap $G_n, n \geq 0,$ is $0,1$ or $2$. 
\elem
\bpf
$R_+^0(t;\lambda)$ is strictly increasing and continuous in $\lambda$ on each continuity segment and its continuity can be broken only at one point (pole) in $G_n$, see Lemma \ref{L:G_monotone2}. As $R^0_-(0;\lambda)$ is continuous and decreasing throughout $G_n$, only up to 2 intersections of $R_+^0(t;\lambda)$ and $R^0_-(0;\lambda)$ can occur.
\epf

\bthm\label{T:G0_one_sided}
Suppose $V_0$ satisfies the basic assumptions, i.e., it is continuous, even and $d$-periodic, and let $s=0$ in \eqref{E:transl_interf}, and consider the semi-infinite gap $G_0=(-\infty, s_1)$.

\begin{itemize}
\item[(a)] If $V_0$ is strictly increasing on $[0,d/2]$, then there is no/exactly one dislocation eigenvalue in $G_0$ for $t\in [0,d/2]$ / $(d/2,d)$ respectively.
\item[(b)] If $V_0$ is strictly decreasing on $[0,d/2]$, then there is exactly one/no dislocation eigenvalue in $G_0$ for $t\in (0,d/2)$ / $[d/2,d]$ respectively.
\end{itemize}
\ethm

\bpf
We, once again, present the proof only of (a) as (b) follows by shifting the potential in $x$ (or $t$) by $d/2$. As explained in the proof of Theorem \ref{T:main_disloc_G0}, 
$s_1$ is a Neumann eigenvalue and the corresponding eigenfunction can be taken positive on $[0,d/2]$ with $u'<0$ on $(0,d/2)$ and with a point of even symmetry at $x=d/2$. 

By Lemmas \ref{L:G_monotone}, \ref{L:R_lim_vals} and \ref{L:R_asympt_infty} the function $R^0_-(0;\lambda)$ decreases continuously from $\infty$ at $\lambda \rightarrow -\infty$ to $0$ at $\lambda=s_1$. The behavior of $R^0_+(t;\lambda)$ is explained in the proof of Theorem \ref{T:main_disloc_G0}. It follows that $R^0_-(0;\lambda)$ and $R^0_+(t;\lambda)$ intersect in $G_0$ exactly once for $t\in (d/2,d)$ and do not intersect for  $t\in [0,d/2]$.
\epf

\begin{theorem}
\label{T:G1_one_sided}
Suppose $V_0$ is an even, $d$-periodic $C^1$-function, let $s=0$ in \eqref{E:transl_interf}, and consider the first finite gap $G_1=(s_2, s_3)$.
\begin{itemize}
\item[(a)] If $V_0$ is strictly increasing on $[0,\frac{d}{2}]$, and hence $G_1=(\nu_2,\mu_1)$, then there is exactly one dislocation-eigenvalue in $G_1$ for all $t\in (0,d)$.
\item[(b)] If $V_0$ is strictly decreasing on $[0,\frac{d}{2}]$, and hence $G_1=(\mu_1,\nu_2)$, then we have the following alternative for the second Neumann-eigenfunction $u$: 
\begin{itemize}
\item[(b1)] $u$ is strictly monotone on $[0,\frac{d}{2}]$. Then there is exactly one dislocation-eigenvalue in $G_1$ for all $t\in (0,d)$.
\item[(b2)] $u$ changes monotonicity on $[0,\frac{d}{2}]$ exactly once at the extremal point $d_0\in (0,\frac{d}{2})$. Then the number of dislocation-eigenvalues in $G_1$ is as follows: 
\begin{table}[!h]
\begin{center}
\begin{tabular}{|l|c|c|c|c|c|}
\hline
dislocation parameter & $t\in (0,d_0)$ & $t\in[d_0,d-d_0)$ & $t\in [d-d_0,d)$\\
\hline
number of eigenvalues & 2 & 1 & 0\\
\hline
\end{tabular}
\end{center}
\end{table}
\end{itemize}
\end{itemize} 
\end{theorem}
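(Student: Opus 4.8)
The plan is to recycle the graphical intersection argument of Theorem~\ref{T:main_disloc_G1}, with the horizontal axis $y=0$ replaced by the graph of $R_-(\lambda)$ (which by the reduction preceding the theorem equals $R^0_-(0;\lambda)$): an eigenvalue in $G_1$ corresponds to a solution $\lambda\in G_1$ of $R(t;\lambda)=R_-(\lambda)$, where $R(t;\lambda):=R^0_+(t;\lambda)$.

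\emph{Step 1 (the two curves).} By Lemma~\ref{L:G_monotone}, $R_-$ is continuous and strictly decreasing on $G_1$, and by Lemma~\ref{L:R_lim_vals} it tends to $0$ at the Neumann gap edge and to $\pm\infty$ at the Dirichlet gap edge; since $G_1=(\nu_2,\mu_1)$ in case~(a) and $G_1=(\mu_1,\nu_2)$ in case~(b), this forces $R_-<0$ throughout $G_1$ in case~(a) and $R_->0$ throughout $G_1$ in case~(b). For $R(t;\cdot)$ I would import from Lemmas~\ref{L:G_monotone2} and~\ref{L:G_monotone3} and from the proof of Theorem~\ref{T:main_disloc_G1} that $\lambda\mapsto R(t;\lambda)$ is continuous and strictly increasing on $G_1$ except for at most one pole $\lambda_t$, where it jumps from $+\infty$ to $-\infty$; that its one-sided limits at the gap edges are $\cot\theta(t;\mu_1)$ and $\cot\theta(t;\nu_2)$; and that, in the normalization $\theta(0;\mu_1)=0$, the band-edge Pr\"ufer angles behave as follows. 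In case~(a) the second Neumann eigenfunction is strictly monotone (Lemma~\ref{L:ordering}(a)), so $\theta(t;\nu_2)$ increases from $\tfrac{\pi}{2}$ through $\pi$ at $t=\tfrac{d}{2}$ to $\tfrac{3\pi}{2}$, whence $R(t;\nu_2)<0$ on $(0,\tfrac{d}{2})$ and $R(t;\nu_2)>0$ on $(\tfrac{d}{2},d)$. In case~(b) the first Dirichlet eigenfunction is strictly monotone on $[0,\tfrac{d}{2}]$ (Lemma~\ref{L:ordering}(b)), so $\theta(t;\mu_1)$ increases from $0$ through $\tfrac{\pi}{2}$ at $t=\tfrac{d}{2}$ to $\pi$, while $\theta(t;\nu_2)$ is either strictly increasing on $[0,d]$ (case (b1)) or, if the second Neumann eigenfunction changes monotonicity at $d_0$ (case (b2)), decreasing on $[0,t_0]$, then increasing and taking the value $-\tfrac{\pi}{2}$ at $t=d_0$ and $0$ at $t=\tfrac{d}{2}$, then decreasing on $[d-t_0,d]$; this last profile I would establish by the reflection-about-$\tfrac{d}{2}$ identity used in Lemma~\ref{L:G_monotone3} together with positivity of the Neumann eigenfunction on $[0,\tfrac{d}{2})$, which fixes the branch.

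\emph{Step 2 (counting).} Since $R(t;\cdot)-R_-$ is strictly increasing on every continuity segment, it has at most one zero per segment, so I would count the zeros of $R(t;\cdot)-R_-$ in $G_1$ using only: (i) its sign at the two gap edges, namely $\pm\infty$ with the sign already fixed in Step~1 at the Dirichlet edge, and $R(t;\nu_2)=\cot\theta(t;\nu_2)$ at the Neumann edge; and (ii) whether the pole $\lambda_t$ lies inside $G_1$, which occurs precisely when the consistently normalized edge angles $\theta(t;\mu_1)$ and $\theta(t;\nu_2)$ straddle a multiple of $\pi$. Running through the $t$-ranges: in case~(a), for $t\in(0,\tfrac{d}{2})$ the angles do not straddle a multiple of $\pi$, so there is no pole, while $R(t;\nu_2)<0$, giving one zero; for $t\in(\tfrac{d}{2},d)$ they straddle $\pi$, so $\lambda_t\in G_1$, and since $R(t;\nu_2)>0$ the pre-pole segment contributes no zero while the post-pole segment contributes one, again giving one; at $t=\tfrac{d}{2}$ one checks directly that $R(\tfrac{d}{2};\cdot)$ has a band-edge pole at $\nu_2$ but none inside $G_1$, and the count is again one. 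In case~(b2), for $t\in(0,d_0)$ the angles straddle $0$ and $R(t;\nu_2)>0$, so both segments contribute and there are two zeros; for $t\in[d_0,d-d_0)$ either a pole is present with $R(t;\nu_2)\le 0$ or no pole is present with $R(t;\nu_2)>0$, each giving exactly one zero (with $t=\tfrac{d}{2}$ again handled directly); for $t\in[d-d_0,d)$ there is no interior pole and $R(t;\nu_2)\le 0$, giving no zero. Case~(b1) follows from the same bookkeeping, the monotone second Neumann eigenfunction now forcing $R(t;\nu_2)<0$ throughout $(0,\tfrac{d}{2})$, which prevents the pole there from producing a second zero and yields the count $1$ for every $t\in(0,d)$.

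\emph{The main obstacle} is Step~2(ii): unlike in the symmetric-dislocation analysis, the sign of $R(t;\nu_2)$ alone does not determine the eigenvalue count, and one must track the \emph{absolute} Pr\"ufer angle $\theta(t;\lambda)$, not merely its value modulo $\pi$, consistently as $\lambda$ sweeps $\overline{G_1}$, in order to decide whether $R(t;\cdot)$ acquires a pole inside $G_1$. Carrying out this branch bookkeeping correctly --- in particular establishing the full $t$-profile of $\theta(t;\nu_2)$ in case~(b2), with its values $-\tfrac{\pi}{2}$ at $d_0$ and $0$ at $\tfrac{d}{2}$ --- is where essentially all the work is; the remainder is the intermediate value theorem together with the monotonicity already established in Lemmas~\ref{L:G_monotone}, \ref{L:R_lim_vals}, \ref{L:G_monotone2} and~\ref{L:G_monotone3}.
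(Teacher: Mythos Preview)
Your strategy is the paper's strategy: count intersections of $\lambda\mapsto R(t;\lambda)$ with the strictly decreasing curve $R_-(\lambda)=R^0_-(0;\lambda)$ on $G_1$, using Lemmas~\ref{L:G_monotone}, \ref{L:R_lim_vals} for $R_-$ and Lemma~\ref{L:G_monotone2} together with the edge analysis from Theorem~\ref{T:main_disloc_G1} for $R(t;\cdot)$. The case-by-case counts you describe are correct.

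Two points where the paper is more economical than your proposal. First, your ``main obstacle'' --- deciding whether $R(t;\cdot)$ has an interior pole by tracking the \emph{absolute} Pr\"ufer angle consistently across $\overline{G_1}$ --- is unnecessary. Lemma~\ref{L:G_monotone2} already gives the criterion directly in terms of $R$: there is a pole in $G_1$ if and only if $R(t;s_2)>R(t;s_3)$ (left-edge value exceeds right-edge value), since with a pole the pre-pole values strictly dominate the post-pole ones, while without a pole $R$ is increasing. So one only needs the \emph{signs and ordering} of $R(t;\mu_1)$ and $R(t;\nu_2)$, which are read off from the band-edge eigenfunctions; no branch bookkeeping of $\theta$ modulo $\pi$ is required. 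Second, for case~(b) the paper does not redo the $t$-profile of $\theta(t;\nu_2)$ from scratch as you propose: since $V_0$ strictly decreasing on $[0,\tfrac{d}{2}]$ is the same as $V_0(\cdot+\tfrac{d}{2})$ strictly increasing, the entire description of $R(t;\mu_1)$ and $R(t;\nu_2)$ in case~(b) is obtained from the case~(a) analysis of Theorem~\ref{T:main_disloc_G1} by a $\tfrac{d}{2}$-shift in $t$ together with swapping the roles of $\mu_1$ and $\nu_2$. This replaces your ``where essentially all the work is'' step by a one-line reduction.
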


\bpf
{\em Case (a):} As explained in the proof of part (a) of Theorem \ref{T:main_disloc_G1}, for $t\in(0,d/2]$ we have $R_+^0(t;\nu_2+)<0$ and $R_+^0(t;\lambda)$ continuous and increasing in $\lambda \in G_1$. Therefore, $R_+^0(t;\lambda)$ intersects $R_-^0(0;\lambda)$ exactly once, as $R_-^0(0;\lambda)$ decreases continuously from $0$ at $\lambda=\nu_2+$ to $-\infty$ at $\lambda= \mu_1-$, see Lemmas \ref{L:G_monotone}, \ref{L:R_lim_vals}.

Next, as the proof of  Theorem \ref{T:main_disloc_G1} (a) shows, for $t\in(d/2,d)$ the function $R_+^0(t;\lambda)$ has a pole at some $\lambda_t\in G_1$ and increases continuously on the interval
$(\nu_2,\lambda_t)$ with $R_+^0(t;\nu_2)>0, R_+^0(t;\lambda_t-)=\infty$ and on the interval $(\lambda_t,\mu_1)$ with $R_+^0(t;\lambda_t+)=-\infty$. The functions $R_+^0(t;\lambda)$ and $R_-^0(0;\lambda)$, therefore, intersect exactly once on $\lambda\in(\lambda_t,\mu_1)$ and they do not intersect on $\lambda\in(\nu_2,\lambda_t)$.

\noindent
{\em Case (b):} In the case of $V_0$ strictly decreasing on $[0,d/2]$ the function $R_-^0(0;\lambda)$ is continuous and strictly decreasing from $\infty$ at $\lambda = \nu_2+$ to $0$ at $\lambda=\mu_1$, see Lemmas \ref{L:G_monotone}, \ref{L:R_lim_vals}. We obtain below the behavior of $R_+^0(t;\lambda)$ from that of $R(t;\lambda)$ in the proof of Theorem \ref{T:main_disloc_G1} (a) by the shift of $d/2$ in $t$ and switching of the roles of $\mu_1$ and $\nu_2$. 
 
{\em Case (b1):} For $t\in (0,d/2)$ we have $R_+^0(t;\mu_1)>0>R_+^0(t;\nu_2)$ and $R_+^0(t;\lambda)$ has one pole in $\lambda$ within $G_1$. $R_-^0(0;\lambda)$ thus intersects $R_+^0(t;\lambda)$ exactly once on $G_1$. For $t\in [d/2,d)$ the function $R_+^0(t;\lambda)$ is continuous on $G_1$ and $R_+^0(t;\mu_1)\leq 0<R_+^0(t;\nu_2-)$. Exactly one intersection of $R_+^0(t;\lambda)$ and $R_-^0(0;\lambda)$ thus exists.

{\em Case (b2):} For  $t\in (0,d_0)$ the function $R_+^0(t;\lambda)$ behaves  in $\lambda$ like $R(t;\lambda)$ on $t\in (d/2,d-d_0)$ in (v) in the proof of Theorem~\ref{T:main_disloc_G1} (a2). Note that $d_0$ here corresponds to $d/2-d_0$ in the proof of Theorem~\ref{T:main_disloc_G1}. Namely, we get $R_+^0(t;\mu_1)>R_+^0(t;\nu_2)>0$ and a pole of $R_+^0(t;\lambda)$ at some $\lambda_t\in G_1$. Two intersections of $R_+^0(t;\lambda)$ and $R_-^0(0;\lambda)$ thus exist. For $t\in [d_0,d-d_0)$ the behavior of the eigenfunction and hence of $R_+^0(t;\lambda)$ is qualitatively the same as in (b1) of this proof and precisely one eigenvalue thus appears in $G_1$. Finally, for $t\in [d-d_0,d)$ the function $R_+^0(t;\lambda)$ behaves in $\lambda$ like $R(t;\lambda)$ on $t\in [d_0,d/2)$ in (iii) in the proof of Theorem~\ref{T:main_disloc_G1} (a2). Therefore, $R_+^0(t;\mu_1)<R_+^0(t;\nu_2)\leq 0$ and $R_+^0(t;\lambda)$ is continuous throughout $G_1$. No intersections of $R_+^0(t;\lambda)$ and $R_-^0(0;\lambda)$ thus occur. Finally, $t=d$ corresponds to no dislocation resulting in a purely continuous spectrum of $L$.
\epf

\paragraph{Numerical Results}\label{S:numerics_one_side_disloc}

Results of numerical eigenvalue computations with the dislocation interface \eqref{E:transl_interf} with $s=0$ and $V_0=\sin^2(\pi x/10)$ are displayed in Figure~\ref{F:pt_spec_disloc_one_side}. They agree with Theorems \ref{T:G0_one_sided} and \ref{T:G1_one_sided}. Figure~\ref{F:efns_disl_one_side} shows the eigenfunctions corresponding to the 6 labeled eigenvalues in Figure~\ref{F:pt_spec_disloc_one_side}. As expected, they lack symmetry in contrast with the eigenfunctions of the symmetric dislocation in Figure~\ref{F:pt_spec_disloc_efns}.

\begin{figure}[!ht]
\begin{center}
\includegraphics[scale=.4]{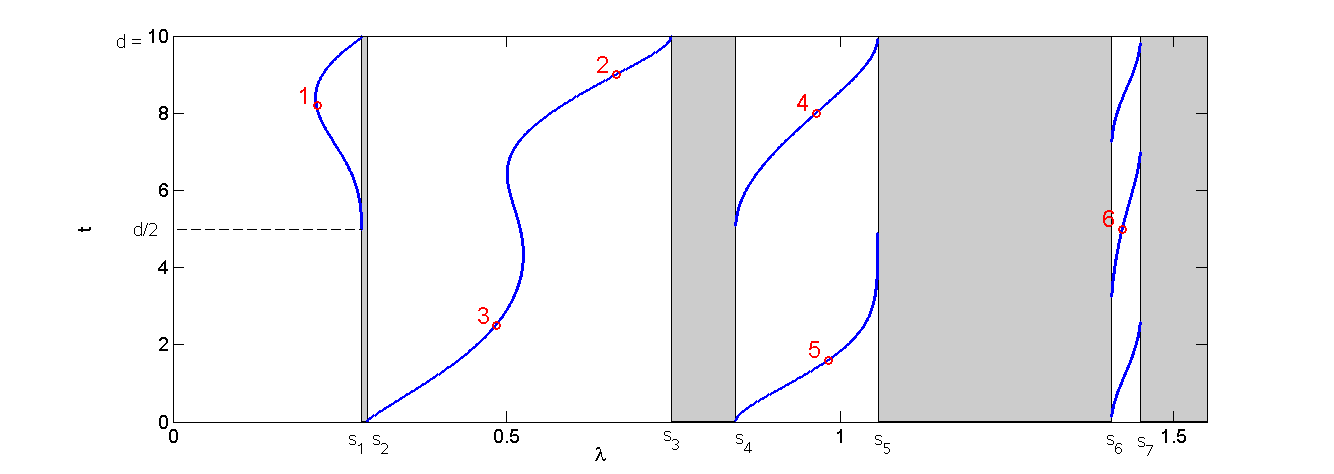}
\end{center}
\caption{Point spectrum of $L$ for \eqref{E:transl_interf} with $s=0$ and $V_0=\sin^2(\pi x/10)$. Eigenfunctions for the labeled points are plotted in Figure~\ref{F:efns_disl_one_side}.}
\label{F:pt_spec_disloc_one_side}
\end{figure}

\begin{figure}[!ht]
\begin{center}
\includegraphics[scale=.5]{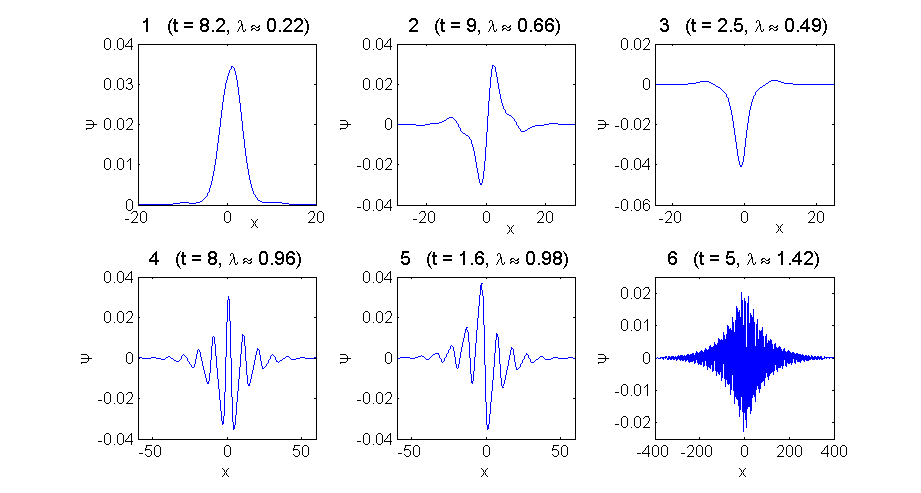}
\end{center}
\caption{Eigenfunctions corresponding to the 6 labeled eigenvalues in Figure~\ref{F:pt_spec_disloc_one_side}.}
\label{F:efns_disl_one_side}
\end{figure}

The results for $V_0(x) = \cos^2(\pi x/10)$, as an example of a potential that falls in the case (b) of Theorem \ref{T:G1_one_sided}, appear in Figures \ref{F:pt_spec_disloc_one_side_cos} and \ref{F:efns_disl_one_side_cos}. As we know from Section \ref{S:disloc_sym}, the potential $\cos^2(\pi x/10)$ falls into the case (b2) and the second Neumann eigenfunction thus changes monotonicity on $(0,d/2)$, see Figure~\ref{F:pt_spec_disloc_one_side_cos} top. Agreement of the numerics with Theorems \ref{T:G0_one_sided} and \ref{T:G1_one_sided} is, once again, observed.
\begin{figure}[!ht]
\begin{center}
\includegraphics[scale=.45]{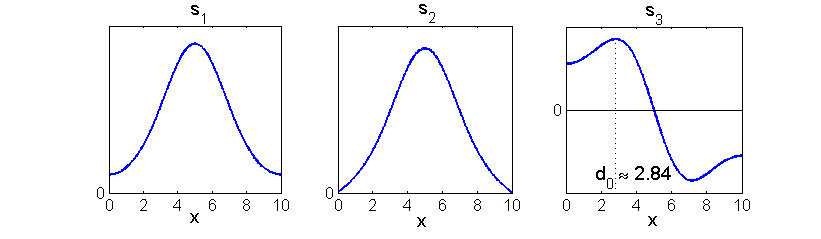}\\
\includegraphics[scale=.4]{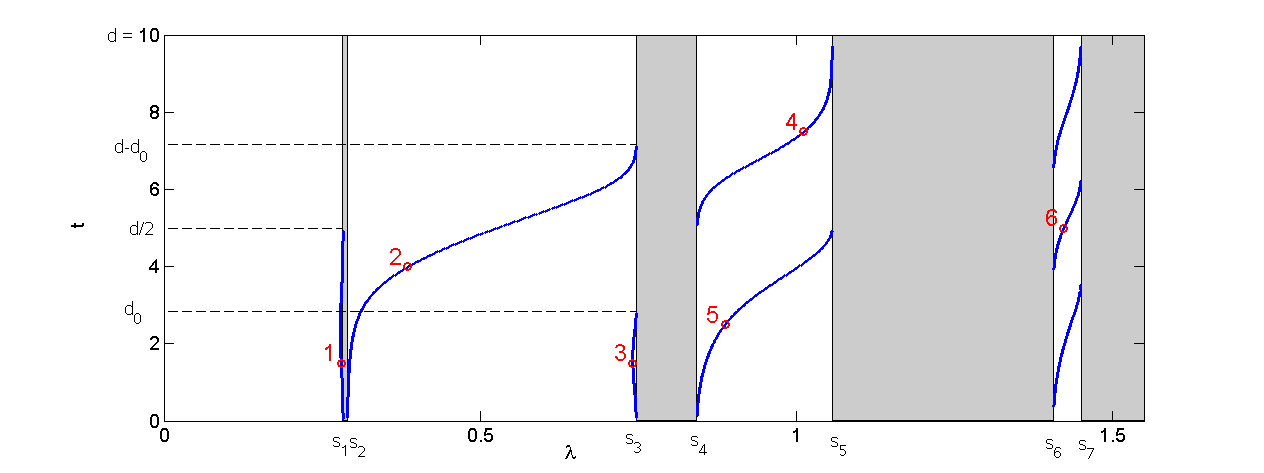}
\end{center}
\caption{top: the first three band edge Bloch functions of $L_0$ with $V_0(x) = \cos^2(\pi x/10)$;  bottom: point spectrum of $L$ for \eqref{E:transl_interf} with $s=0$ and $V_0=\cos^2(\pi x/10)$ for $t\in [0,d)$. The spectral bands of $L$ are shaded. Eigenfunctions for the labeled points are plotted in Figure~\ref{F:efns_disl_one_side_cos}.}
\label{F:pt_spec_disloc_one_side_cos}
\end{figure}

\begin{figure}[!ht]
\begin{center}
\includegraphics[scale=.5]{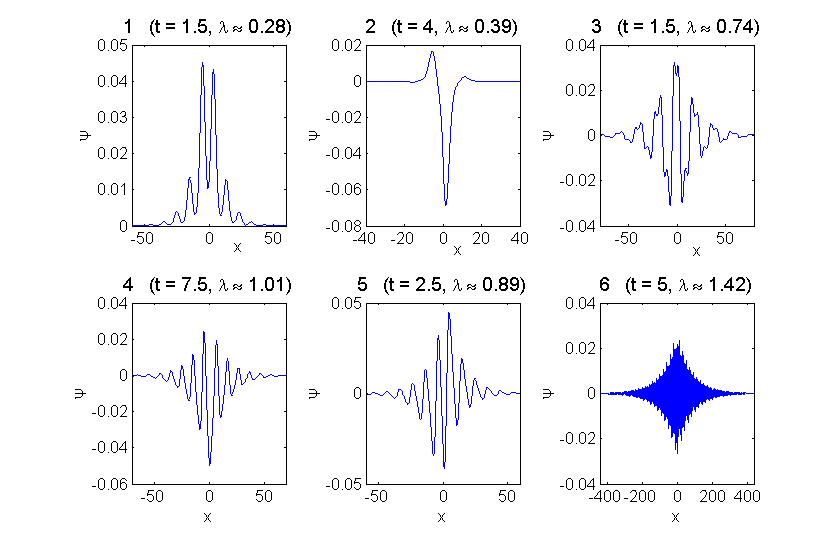}
\end{center}
\caption{Eigenfunctions corresponding to the 6 labeled eigenvalues in Figure~\ref{F:pt_spec_disloc_one_side_cos}.}
\label{F:efns_disl_one_side_cos}
\end{figure}

%\section{Conclusions:}

%Future work:
%\bi
%\item non-even $V(x)$ in 1D
%\item 2D [NOTE THAT IN THE SEPARABLE CASE $V(x) = V_1(x_1)+V_2(x_2)$ WE CAN STILL USE THE SAME TECHNIQUES !!] 
%\ei

{\bf Acknowledgements} 
The authors would like to thank an anonymous referee for making a conjecture about the finite number of interface eigenvalues in the additive interface case. We have proved the corresponding result in the remark following Corollary \ref{C:unique_eval}. The work of T. Dohnal is supported by the Alexander von Humboldt Research Fellowship.

\bibliographystyle{abbrv}
\bibliography{bibliography}

\end{document}